\tikzstyle{vertex}=[circle, draw, inner sep=0pt, minimum size=6pt]
\definecolor{claret}{RGB}{141,40,56}
\newtheorem{theorem}{Theorem}[section]
\newtheorem{corollary}[theorem]{Corollary}
\newtheorem{lemma}[theorem]{Lemma}
\newtheorem{definition}[theorem]{Definition}
\title {An introduction to the deduction number}
\author{Andrea Burgess\thanks{Department of Mathematics and Statistics, University of New Brunswick, Saint John, NB  E2L 4L5, Canada.  \texttt{andrea.burgess@unb.ca}}, Danny Dyer\thanks{Department of Mathematics and Statistics, Memorial University of Newfoundland, St.\ John's, NL  A1C 5S7, Canada.  \texttt{dyer@mun.ca}} and Mozhgan Farahani\thanks{Department of Mathematics and Statistics, Memorial University of Newfoundland, St.\ John's, NL  A1C 5S7.  \texttt{mfarahani@mun.ca}}}
\begin{document}

\maketitle

\begin{abstract}
The deduction game is a variation of the game of cops and robber on graphs in which searchers must capture an invisible evader in at most one move. Searchers know each others' initial locations, but can only communicate if they are on the same vertex. Thus, searchers must deduce other searchers' movement and move accordingly. We introduce the deduction number and study it for various classes of graphs. We provide upper bounds for the deduction number of the Cartesian product of graphs. 
\end{abstract}

\section{Introduction}

The game of cops and robber belongs to the larger class of pursuit-evasion games on graphs. A pursuit-evasion game is a two-player game in which pursuers attempt to capture evaders, while the evaders try to prevent this capture in a graph. The pursuers may be represented by cops, searchers, or even zombies. The evaders might be robbers, lost people, or survivors. Pursuit-evasion games can be applied to unmanned aerial vehicles \cite{H}, differential games \cite{IS}, robotics \cite{Chung}, control theory \cite{Sh}, and so on.

Pursuit-evasion on a graph was first introduced by Breisch \cite{Bre}, though much of the deeper mathematical work was first done by Parsons \cite{Parsons}. In this searching game, a fixed number of searchers who know the structure of a cave try to find a lost child. The goal becomes determining the minimum number of searchers to guarantee the lost child is found. Since then, other variants of the game have been studied. 

These games  vary based on the capabilities of the players, the information available to each player, the environment in which the game occurs, the strategies allowed to the 
pursing agents or searchers to track evaders, and so forth. However, the main feature in all of them is that the searchers are asked to find an evader who wishes to avoid the searchers. The phrase ``to capture the evader'' typically means that some searcher occupies the same vertex as the evader. How the agents move may vary in different models. In one game, agents might move from a vertex to an adjacent vertex \cite{Tosic2}. In another game, agents may be able to jump from a vertex to another non-adjacent vertex \cite{Abbas}. Searchers may know the evader's location \cite {Winkler} or may not \cite{Parsons}.

The deduction game considers the cops and robber model with some extra restrictions for the cops, which we term searchers. Imagine there is a city with some residential areas connected by streets. There might be one or more evaders hidden in these areas, and the searchers aim to capture all these evaders, but the searchers lack information about the number of evaders and the evaders' locations. (Alternatively, we may assume that if there is a possibility that the evader escapes capture, then they do.) Searchers also have no means to communicate when they are in different areas. The question becomes: what strategy should searchers use to guarantee capturing the evader(s), and further, how few searchers are required for this capture? 

In Section 2, we introduce the deduction model and compare it with other well-known models. In Section 3, we introduce some bounds involving elementary graph parameters, and then use these to find the deduction number of various graph families in Section 4. In Section 5, we tackle the problem of trees. Finally, in Section 6, we end by posing some new questions. For elementary graph theoretical definitions, we follow \cite{West}.


\section{The deduction model}

In the initial setup, searchers are placed on vertices of a graph $G$; we refer to the arrangement of searchers on $G$ as a {\em layout}. 
Note that in a layout, a vertex may contain more than one searcher.  In a layout, a vertex is {\em unoccupied} if it contains no searcher, and {\em occupied} if it contains at least one searcher.  The searchers know the initial layout and the structure of the graph; however, searchers located on different vertices cannot communicate to coordinate their movement.  The searchers' aim is to capture an invisible intruder located on a vertex of $G$.

We next describe the rules governing the searchers' movement.  We say that a vertex is {\em protected} if it is occupied in the initial layout or if a searcher has moved there at some point, and {\em unprotected} otherwise.  A vertex is {\em fireable} if the number of searchers on that vertex is greater than or equal to the number of its unprotected neighbours.  Initially each searcher is considered {\em mobile}; once a searcher has moved once, it becomes {\em immobile}, i.e.\ cannot move again.  At each {\em stage} of the game, for every fireable vertex $v$ containing mobile searchers, one searcher moves from $v$ to each unprotected neighbour of $v$;  these vertices are referred to as the {\em target} vertices of $v$.  Note that if the number of searchers on $v$ exceeds the number of unprotected neighbours, some searchers will remain on $v$.  The game proceeds in successive stages until no new searchers move.
We say that the searchers win if all vertices become protected; otherwise the intruder wins.  If the searchers win, we call the layout $L$ {\em successful}.  

Since each searcher may move only once, we may consider all searchers as moving at the same time in practice, and think of the stages of movement described previously as a process of searchers {\em deducing} what their movement will be.
However, by an abuse of terminology we speak of a searcher moving to a target vertex at the time they deduce the location of their target.

As an example, consider the cycle $C_8$ with the layout illustrated in Figure~\ref{8-cycle}.  In the first stage, only $v_1$ and $v_2$ are fireable, and the searchers on these vertices move to $v_8$ and $v_3$.  Thus, in the second stage $v_4$ becomes fireable, and the searcher on this vertex moves to $v_5$.  Finally, in the third stage, the searcher on $v_6$ moves to $v_7$.  At this point, all vertices are protected, so the layout is successful.
\begin{figure}[ht]
\centering
\begin{tikzpicture}[x=1cm,y=1cm,scale=1]
\foreach \x in {0,...,7}{
    \draw (45*\x:2) -- (45*\x+45:2);
}
\foreach \x in {0,...,7}{
    \draw[fill=white] (45*\x:2) circle (4pt);
}
\draw (90:2) node[above, inner sep=6pt]{$v_1$};
\draw (90:2) node[below, inner sep=6pt]{$S$};
\draw (45:2) node[above right]{$v_2$};
\draw(45:2) node[below left]{$S$};
\draw (0:2) node[right, inner sep=6pt]{$v_3$};
\draw (-45:2) node[below right]{$v_4$};
\draw (-45:2) node[above left]{$S$};
\draw (-90:2) node[below, inner sep=6pt]{$v_5$};
\draw (-135:2) node[below left]{$v_6$};
\draw (-135:2) node[above right]{$S$};
\draw (180:2) node[left, inner sep=6pt]{$v_7$};
\draw (135:2) node[above left]{$v_8$};
\end{tikzpicture}
\caption{A successful layout on $C_8$ \label{8-cycle}}
\end{figure}
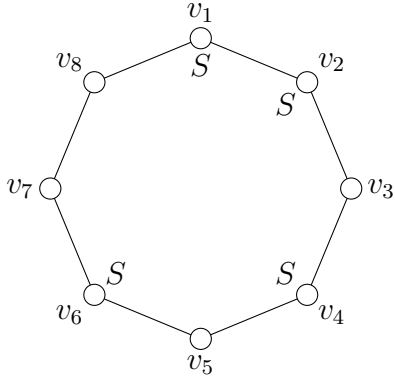

At a given stage of the game, if there are more unprotected neighbours than the number of searchers on a vertex, they fail to deduce their movement and are {\it flummoxed}. Searchers may remain flummoxed for a fixed collection of deductions, until their vertex becomes fireable. Note that it is possible for a searcher to remain in the same position throughout the game (for instance, if all their neighbours are protected or they remain flummoxed throughout); we refer to such searchers as {\it motionless}.

We are interested in determining the minimum number of searchers required in a successful layout on a given graph $G$.  We call this number the \emph{deduction number} of $G$ and denote it by $d(G)$. 
 
Suppose that there is a graph $G$ with a successful layout $L$. Then the layout $L^{\ast}$ is a {\it terminal layout} for $L$ in $G$ if, after the motion of all searchers that need to move for protection of their adjacent vertices, $L$ produces $L^{\ast}$. 
Given a layout $L$, its terminal layout is unique. However, it is not true in general that $(L^*)^*=L$.
For example, consider the path $P_{3}=v_1v_2v_3$. One successful layout $L$ contains one searcher on $v_{1}$ and another searcher on $v_{2}$. The searcher on $v_{2}$ moves to $v_{3}$, producing the terminal layout $L^{\ast}$, which contains one searcher on vertex $v_{1}$ and one searcher on vertex $v_{3}$.  If we play the game with initial layout $L^{\ast}$, we see that $(L^*)^*$ has both searchers on $v_2$.
 
The deduction game bears some relation to certain variants of the game of cops and robber, which was first introduced independently by Nowakowski and Winkler~\cite{Winkler} and Quilliot~\cite{Quilliot}, and extended to include more than one cop by Aigner and Fromme~\cite{Aigner}.  In this game, the goal of the cops is to capture a robber in a graph $G$, where players have perfect information, may communicate, and have no restriction on the number of movements they can make. The {\em cop number $c(G)$} is the minimum number of cops required to guarantee capture of the robber in $G$ in a finite number of moves.  When the robber is invisible to the cops except at the time of capture, we refer to the {\em zero-visibility} version of the game and speak of the {\em zero-visibility cop number $c_0(G)$}.  This variant was first introduced by To\v{s}i\'{c}~\cite{Tosic2}; for more information, see~\cite{DDTY1, DDTY2, Tang}. Time-constained versions of cops and robber, in which the cops must capture the robber within a given number of turns, were considered in~\cite{time}.  In particular, $1$-$c(G)$ and $1$-$c_0(G)$ are the number of cops required to capture the robber in at most one turn if the robber is visible and invisible, respectively.  In~\cite{time}, it is shown that $1$-$c(G) = \gamma(G)$, the domination number of $G$, and $1$-$c_0(G) = \beta'(G)$, the size of a minimum edge cover of $G$.

In contrast to deduction, all of these games allow cops to communicate to coordinate their movements; thus a cop may move from a vertex to an adjacent vertex even if the cop's other neighbours are not all protected.  Nevertheless, the searchers' movement in a successful game of deduction yield a strategy for cops to win each of these other games, leading to the following bounds.  

\begin{theorem}
In any graph $G$, $c(G) \leq 1$-$c(G) \leq d(G)$ and $c_0(G) \leq 1$-$c_0(G) \leq d(G)$.
\end{theorem}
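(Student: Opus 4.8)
The plan is to establish the chain of inequalities in two parts: the easy comparisons between the time-constrained parameters and their unconstrained counterparts, and the substantive inequality $1\text{-}c(G) \le d(G)$ (and likewise for the zero-visibility versions). For the first part, $c(G) \le 1\text{-}c(G)$ and $c_0(G) \le 1\text{-}c_0(G)$ are immediate: any cop strategy that captures the robber within one turn is in particular a strategy that captures the robber in a finite number of turns, so the minimum number of cops needed without the time restriction cannot exceed the minimum needed with it. (This is essentially already recorded in the cited work on time-constrained cops and robbers, so I would simply remark on it.)

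The heart of the proof is to convert a successful deduction layout into a winning $1$-turn cop strategy. Suppose $L$ is a successful layout on $G$ using $d(G)$ searchers. I would place cops on exactly the vertices occupied by searchers in $L$ (with multiplicity), so the cops start in the layout $L$. Now run the deduction process: in the successful game, every vertex of $G$ eventually becomes protected, meaning either it was initially occupied or some searcher deduced a move onto it. I would have each cop make the single move that the corresponding searcher deduces — a cop sitting on vertex $v$ that fires moves to one of the unprotected neighbours of $v$, matching the searcher's target, and a motionless searcher corresponds to a cop that stays put. Since the searchers are allowed to communicate only when co-located but the cops may communicate freely and have perfect information, the cops can certainly reproduce the searchers' deduced moves — the cops simply know in advance the entire deduction schedule. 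After all cops move (one move each, so this is a single turn in the $1$-turn cops model), every vertex of $G$ is occupied by a cop, because every vertex is protected in the terminal layout $L^\ast$ and "protected" means "initially occupied or moved onto," which for the cops translates to "currently occupied." Hence wherever the (visible or invisible) robber is, it is on a vertex occupied by a cop at the end of the turn, so it is captured. This shows $d(G)$ cops suffice to win the $1$-turn game, giving $1\text{-}c(G) \le d(G)$; and since the robber's visibility was never used in the argument, the same layout works against an invisible robber, giving $1\text{-}c_0(G) \le d(G)$ as well.

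One technical point I would be careful about: in deduction, a fireable vertex with more searchers than unprotected neighbours retains some searchers, and a single searcher moves to each unprotected neighbour; I should make sure the correspondence sends one cop along each of these moves and leaves the surplus cops in place, so that the multiset of final cop positions is exactly the multiset of protected vertices (with the occupied-vertex multiplicities of $L^\ast$). I would also note that "every vertex becomes protected" is precisely the winning condition for the searchers by definition, so no vertex is left uncovered — this is the step that makes the whole argument go through, and it is more a matter of unwinding the definitions than of a clever construction.

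I expect the main obstacle — really the only subtle point — to be stating cleanly why the cops can legitimately carry out the searchers' deduced moves despite the searchers' communication handicap: the resolution is that a deduction strategy is determined entirely by the fixed initial layout and the graph, both of which the cops know, so the cops can compute the whole schedule of moves offline and then execute it, and relaxing the communication constraint can only help. Everything else is bookkeeping about the definitions of "protected," "fireable," and "target."
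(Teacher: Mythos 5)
Your proposal is correct and matches the paper's argument, which is given only as the remark preceding the theorem: a successful deduction layout, in which every searcher moves at most once and every vertex ends up protected, is itself a one-turn winning cop strategy (communication and full information can only help), and the comparisons $c(G)\leq 1$-$c(G)$ and $c_0(G)\leq 1$-$c_0(G)$ are immediate. The only nitpick is that after the cops' single move not every vertex need be \emph{occupied} (a vertex vacated by its cop may be empty), but every vertex is either initially or finally occupied, which suffices to catch the robber whether visible or invisible.
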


Notably, however, the difference between $d(G)$ and $1$-$c(G)$ or $1$-$c_0(G)$ can be arbitrarily large.  For instance, $1$-$c(K_{n})=1$ and $1$-$c_0(K_n)=\lceil n/2\rceil$, while Theorem~\ref{Th_4} will show that $d(K_{n})=n-1$.

The deduction game is also reminiscent of graph processes such as chip-firing~\cite{chip} and brushing~\cite{brush}, in which chips or brushes are transferred from vertices to their neighbours according to a condition regarding their neighbourhood.  In the chip-firing game introduced in~\cite{chip}, initially chips are placed on vertices of a graph.  A vertex is considered fireable if its number of chips exceeds its degree, and in each stage a fireable vertex $v$ is chosen, and one chip is moved from $v$ to each of its neighbours.  Other variations of chip-firing games have also been studied, see for instance~\cite{BitarGoles, DLMN}.  However, one characteristic shared by these games is that chips may be moved to vertices which already contain chips; by contrast, in deduction searchers only move to unoccupied vertices.  In the graph cleaning model introduced in~\cite{brush}, brushes take the place of searchers/chips, and move from one vertex to an adjacent vertex if the number of brushes on a vertex exceeds the number of adjacent dirty edges.  Unlike the searchers in deduction, however, brushes may move repeatedly, and the brush number of a graph can differ from its deduction number by an arbitrary amount; for example, the brush number of a path on $n$ vertices is $1$, while in Theorem~\ref{CO_3}, we will see that $d(P_n) = \lceil \frac{n}{2}\rceil$.  Further connections between deduction and other graph searching models are considered in~\cite{BDOXY}.

\section{Bounds and properties}
We begin our discussion of the deduction number by considering bounds. One trivial arrangement that is a successful layout is to place one searcher on every vertex of a graph. Since every searcher only protects their current vertex, we can improve the number of searchers in any optimal successful layout.
\begin{theorem} If $G$ is a non-trivial graph of order $n\geq 2$, then $d(G)\leq n-1$.
\label{4.1.1}
\end{theorem}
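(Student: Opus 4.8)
The plan is to beat the trivial ``one searcher on every vertex'' layout by exactly one searcher, i.e.\ to exhibit a successful layout on $G$ that uses only $n-1$ searchers.

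First I would use non-triviality of $G$ to fix an edge $uv$. Then I would consider the layout $L$ that places a single searcher on each of the $n-1$ vertices of $V(G)\setminus\{v\}$, leaving $v$ as the only unoccupied vertex, and argue that $L$ is successful. In $L$ every vertex other than $v$ is occupied, hence protected, so $v$ is the unique unprotected vertex; in particular every neighbour of $u$ except $v$ is already protected, so $u$ has exactly one unprotected neighbour, namely $v$. Since $u$ carries one mobile searcher and has a single unprotected neighbour, $u$ is fireable, so in the first stage the searcher on $u$ moves to $v$. At that point all vertices are protected and the searchers win, giving $d(G)\leq n-1$.

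I expect no real obstacle here: the theorem is just the baseline upper bound. The only point that needs a moment's care is confirming that $u$ is fireable in $L$, and this is where the choice to leave \emph{exactly one} vertex unoccupied matters --- it guarantees that $u$'s count of unprotected neighbours is $1$, matching its lone searcher, regardless of $\deg(u)$. (It also quietly uses that a non-trivial $G$ has an edge, without which no unoccupied vertex could ever become protected.) Everything substantive in the paper then consists of the later results that push $d(G)$ strictly below $n-1$ for particular families.
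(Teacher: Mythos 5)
Your proposal is correct and is essentially the paper's own argument: the paper likewise uses non-triviality to find a vertex $v$ of positive degree and places one searcher on every vertex except $v$. You simply spell out the verification that the neighbour $u$ is fireable, which the paper leaves implicit.
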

\begin{proof} Since $G$ is non-trivial, there is a vertex with degree at least $1$, say $v$. Then a successful layout is obtained by placing one searcher on every vertex of $G$ except $v$.
\end{proof}

Similarly, as every searcher can protect at most two vertices (the one it initially occupies and the one it moves to), we obtain the following result.

\begin{theorem} If $G$ is a graph of order $n$, then $d(G)\geq\lceil\frac{n}{2}\rceil$.
\label{Th_1}
\end{theorem}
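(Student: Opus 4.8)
The plan is to set up a counting argument over the course of the entire game. I would argue that in any successful layout, every vertex is protected, and I want to charge each protected vertex to a searcher in such a way that no searcher is charged more than twice. The natural charging is: a searcher charges the vertex it initially occupies, and (if it moves) the vertex it moves to. Since a searcher moves at most once, each searcher is charged at most two vertices this way.

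The key step is to verify that every vertex of $G$ receives at least one charge. By definition, a vertex $v$ is protected if and only if it is occupied in the initial layout or some searcher moves to it at some stage. If $v$ is occupied in the initial layout, then any searcher sitting on $v$ charges $v$. If $v$ is not initially occupied but becomes protected, then by definition some searcher moved to $v$, and that searcher charges $v$. Since the layout is successful, every one of the $n$ vertices of $G$ is protected, so every vertex receives at least one charge. Hence the total number of charges is at least $n$, while it is at most $2d(G)$ if the layout uses $d(G)$ searchers (or more generally at most twice the number of searchers in any successful layout). Therefore $2d(G) \geq n$, i.e.\ $d(G) \geq \lceil n/2 \rceil$, where the ceiling comes from $d(G)$ being an integer.

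I do not anticipate a serious obstacle here; the only subtlety worth stating cleanly is that ``protected'' is precisely the union of ``initially occupied'' and ``reached by a moving searcher,'' which is exactly what the definition gives, so the charging is exhaustive. One should also note that a searcher that remains motionless still charges (only) its initial vertex, and a vertex occupied by several searchers simply gets charged multiple times, which is harmless for a lower bound. The argument uses nothing beyond the rules of the game and does not require analyzing the order in which vertices become fireable.
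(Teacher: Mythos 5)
Your charging argument is exactly the paper's reasoning: the paper states the bound with the one-line justification that each searcher protects at most two vertices (its initial vertex and, if it moves, its target), and since a successful layout protects all $n$ vertices, $2d(G)\geq n$. Your write-up is a correct and slightly more detailed version of the same proof.
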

Theorems \ref{4.1.1} and \ref{Th_1} are tight. As we will see in Theorems~\ref{CO_3} and \ref{Th_4}, $d(K_n)=n-1$ and $d(P_n)= \lceil \frac{n}{2}\rceil$.

We now give some basic bounds on the deduction number of $G$ based on the degrees of the vertices in $G$.

\begin{theorem} In any connected graph $G$ with order $n\geq3$ and $p\geq 1$ vertices of degree 1, $ d(G)\geq p$.
\label{4.1}
\end{theorem}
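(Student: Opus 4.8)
The plan is to show that every leaf of $G$ must be occupied in the initial layout of any successful layout $L$; since there are $p$ leaves, this immediately forces at least $p$ searchers. The key point to exploit is the definition of ``fireable'': a vertex $v$ is fireable only when the number of searchers on $v$ is at least the number of its \emph{protected} neighbours, and a searcher can move onto $v$ from a neighbour $u$ only when $u$ itself becomes fireable and $v$ is unprotected at that time.

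First I would fix a leaf $u$ with unique neighbour $w$, and suppose for contradiction that $u$ is unoccupied in the initial layout. For $u$ to become protected, some searcher must move onto $u$; the only candidate source is $w$, and this requires $w$ to be fireable at the moment it fires toward $u$. Now I would argue that when $w$ fires toward $u$, the vertex $w$ must have at least one protected neighbour other than $u$ — this is where I use $n \geq 3$ and connectedness, so that $\deg(w) \geq 2$. Indeed, before $w$ fires, in order for $w$ to be fireable its searcher count must be at least the number of its protected neighbours; but once a searcher has moved \emph{onto} $w$ at some earlier stage, $w$ is protected via that searcher arriving, and in fact I should instead consider the searcher that \emph{leaves} $w$ to go to $u$. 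The cleanest route: the searcher that moves to $u$ originates on $w$, so $w$ is occupied at some stage; track back to see that $w$ must have been occupied in the initial layout or received a searcher, and in either case, at the stage $w$ fires, $u$ is one of its unprotected neighbours while $w$ must already be protected, meaning $w$ contributes to its own neighbours' protected counts but not conversely.

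The cleaner argument I would actually write: consider the first stage at which some searcher moves onto $u$. At that stage $w$ is fireable, so the number of searchers currently on $w$ is at least the number of protected neighbours of $w$. Since $w$ fires a searcher to each of its unprotected neighbours, and $u$ is unprotected, $w$ sends a searcher to $u$. But also $w$ itself is protected (it is occupied, as it must contain the searcher about to move), so by a parity/counting argument on how many searchers ever occupy $w$ versus how many leave it, one shows the total searcher budget spent in the closed neighbourhood of $u$ is strictly larger than if $u$ had simply been occupied initially — i.e., any successful layout with $u$ unoccupied can be modified to a successful layout with one fewer searcher by placing a searcher on $u$ and removing a redundant one, contradicting minimality, OR more directly: the global count of searchers is at least the number of ``protected-vertex events,'' and an unoccupied leaf costs an extra unit.

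The main obstacle, and the step I expect to require the most care, is making precise the claim that a leaf being unoccupied genuinely \emph{costs} an extra searcher rather than being coverable for free. The subtlety is that the searcher sent from $w$ to $u$ might itself be ``recycled'' usefully — but it cannot, because once it reaches the leaf $u$ it becomes immobile and $u$ has no other neighbours, so that searcher protects nothing beyond $u$. Thus firing $w$ toward $u$ consumes one searcher whose only contribution is protecting $u$, which is exactly the contribution a searcher placed on $u$ initially would make; meanwhile, forcing $w$ to be fireable imposes an extra constraint (its other neighbours must be protected, and $w$ must carry enough searchers). I would formalize this by a direct charging argument: assign to each leaf the searcher that ultimately sits on it in the terminal layout; these assignments are distinct across the $p$ leaves, giving $d(G) \geq p$. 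Handling the degenerate interactions when two leaves share a neighbour $w$ (so $w$ must fire to both) only strengthens the bound and is easily absorbed into the charging scheme.
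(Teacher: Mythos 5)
There is a genuine gap. Your opening claim --- that every leaf must be occupied in the initial layout of any successful layout --- is false: on $P_3=v_1v_2v_3$ the layout with searchers on $v_1$ and $v_2$ is successful (and optimal) even though the leaf $v_3$ is unoccupied. Consequently the contradiction you try to derive from an unoccupied leaf cannot exist, and the assertion that such a layout ``can be modified to a successful layout with one fewer searcher'' is unsupported. Your fallback charging scheme is also broken as stated: you assign to each leaf ``the searcher that ultimately sits on it in the terminal layout,'' but a leaf that is initially occupied while its stem is unprotected fires its searcher onto the stem in the first stage, leaving that leaf unoccupied in the terminal layout. The paper's own $P_3$ example (searchers on $v_1$ and $v_3$, terminal layout with both searchers on $v_2$) defeats this assignment for both leaves at once.

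The argument the paper uses is much shorter and avoids both problems. In a successful layout every leaf $v$ is protected, so either $v$ is initially occupied or some searcher moves onto $v$ from its unique neighbour; charge $v$ to that searcher. These charges are injective over the $p$ leaves: a searcher moves at most once, so it cannot move onto two distinct leaves, and a searcher initially placed on one leaf cannot be the searcher that moves onto another leaf because, by connectedness and $n\ge 3$, no two leaves are adjacent. Hence $d(G)\ge p$. You do identify the right ingredients (immobility after one move, the role of $n\ge 3$ and connectedness), but both formalizations you actually propose fail on small examples, so as written the proof does not go through.
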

\begin{proof} 
In a successful layout on $G$ with $d(G)$ searchers, each vertex $v$ of degree 1 must either be occupied or adjacent to an occupied vertex.  Moreover, since $G$ is connected and of order at least 3, no two vertices of degree 1 can be adjacent. Thus, the number of searchers is at least the number of vertices of degree 1.
\end{proof}
\begin{theorem} For any graph $G$, $d(G)\geq  \delta (G)$.
\label{MinDegree}
\end{theorem}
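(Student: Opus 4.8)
The plan is to take a successful layout $L$ on $G$ with $d(G)$ searchers and show that it must contain at least $\delta(G)$ searchers by examining the very first searcher movement. Consider the first stage of the game. At that point the only protected vertices are the occupied ones, so a vertex $v$ is fireable in the first stage precisely when the number of searchers on $v$ is at least the number of occupied neighbours of $v$. Since the layout is successful, not every vertex can be unprotected forever; in particular some searcher must eventually move, so there must be a vertex that is fireable at the first stage (otherwise no searcher ever moves, and any unoccupied vertex stays unprotected — which, since $d(G) \le n-1$ by Theorem~\ref{4.1.1}, means some vertex is unprotected and the layout fails, unless $n=1$, excluded since $G$ is non-trivial here). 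Actually, more care is needed: the cleanest route is to pick any vertex $u$ that is \emph{unprotected} in the initial layout (such a vertex exists, again because there are at most $n-1$ searchers). Since $L$ is successful, $u$ must eventually be protected, so at some stage a searcher moves into $u$ from a neighbour; let $v$ be the first such neighbour and let $t$ be the stage at which this happens.

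The key observation is then about the state at stage $t$: for the searcher to move from $v$ into $u$, the vertex $v$ must be fireable at stage $t$, meaning the number of searchers currently on $v$ is at least the number of protected neighbours of $v$ at that time. I would argue that in fact $v$ is fireable already at the first stage — or, to avoid that complication entirely, I would instead consider the \emph{first} stage of the whole game at which \emph{any} searcher moves. Let $v$ be a vertex from which a searcher moves at this first-ever moving stage. Before this stage, no searcher has moved, so the set of protected vertices equals the set of initially occupied vertices. Hence the number of protected neighbours of $v$ equals the number of occupied neighbours of $v$, which is at least the number of \emph{unprotected} (hence unoccupied) neighbours that $v$ has — no, that's not automatic either.

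Let me restructure: the clean claim is that $v$ has at least one unprotected neighbour (namely the target it fires into, which by definition is unprotected), and fireability says the searcher count on $v$ is at least the number of protected neighbours of $v$. Now every neighbour of $v$ is either protected or unprotected, and I want a lower bound of $\delta(G)$ on the searcher count on $v$, hence on the total. Since $\deg(v) \geq \delta(G)$, if most of $v$'s neighbours are protected we are done: specifically, the number of searchers on $v$ is at least the number of protected neighbours, so if $v$ has at least $\delta(G)$ protected neighbours we immediately get $\geq \delta(G)$ searchers total. The problematic case is when $v$ has few protected neighbours but many unprotected ones. But at the first-ever moving stage, $v$ fires a searcher to \emph{every} unprotected neighbour simultaneously — so $v$ must carry at least (number of unprotected neighbours) searchers to do this, and also at least (number of protected neighbours) to be fireable, and these cover all $\deg(v) \geq \delta(G)$ neighbours. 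Wait — fireability only requires the count to beat the protected neighbours, and the searchers sent to unprotected neighbours come out of that same pile. Re-examining the rules: a fireable vertex $v$ sends one searcher to \emph{each} unprotected neighbour, so it needs at least $|{\text{unprotected nbrs}}|$ mobile searchers present; and fireability demands count $\geq |{\text{protected nbrs}}|$. If $|{\text{unprotected nbrs}}| \geq |{\text{protected nbrs}}|$ then the binding constraint is having $\geq |{\text{unprotected nbrs}}|$ searchers, but that alone need not reach $\delta(G)$.

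So the honest main obstacle is handling a vertex $v$ at the first moving stage whose neighbours are split so that neither the protected count nor the unprotected count individually reaches $\delta(G)$. I expect the resolution to be that at the \emph{first} moving stage, being fireable with the move actually happening forces count $\geq \max(|{\text{prot nbrs}}|, |{\text{unprot nbrs}}|) \geq \lceil \deg(v)/2 \rceil$, which is not enough — so a subtler argument is required, perhaps tracking searchers that arrive at $v$ over time, or applying the bound to a carefully chosen later stage, or instead arguing globally that the searchers entering and leaving a minimum-degree vertex must number at least $\delta(G)$. I would therefore first try: let $v$ be a vertex of degree $\delta(G)$; in a successful layout every neighbour of $v$ becomes protected, and $v$ itself becomes protected; trace whether $v$ ever fires, and if so it must have held $\geq$ (number of protected neighbours at firing time) searchers, while after the game all $\delta(G)$ neighbours are protected — if $v$ fires only at the last possible moment this gives $\delta(G)$ searchers on $v$; if $v$ never fires, then $v$ is occupied initially (else it stays unprotected) and each of its $\delta(G)$ neighbours must be protected by searchers not counted at $v$, yielding $\geq \delta(G)$ overall via a counting argument over the distinct searchers involved. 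Pinning down this case analysis rigorously — especially ruling out a cheap layout where a low-degree vertex is protected "for free" by a neighbour that itself fires into it — is the crux, and I would spend the bulk of the proof there.
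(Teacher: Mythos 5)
Your proposal does not arrive at a complete proof, and the gap is a missing counting idea rather than an unresolved technicality. You correctly isolate the right vertex --- a vertex $v$ that fires at the first stage at which any searcher moves --- and you correctly observe that bounding only the searchers sitting on $v$ cannot work: that count need only be at least the number of unprotected neighbours of $v$ (one searcher must be dispatched to each), which may be far less than $\deg(v)$. The step you are missing is to count a second, \emph{disjoint} set of searchers: those standing on the occupied neighbours of $v$. At the first stage, ``protected'' and ``occupied'' coincide, so every protected neighbour of $v$ carries at least one searcher of its own. Hence the layout contains at least (number of unoccupied neighbours of $v$) searchers on $v$ itself, plus at least one searcher on each of the occupied neighbours of $v$, for a total of at least $\deg(v) \geq \delta(G)$. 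This is precisely the paper's two-line argument. (One should also confirm such a $v$ exists: if every vertex is occupied the bound is immediate since $n \geq \delta(G)+1$; otherwise some searcher must eventually move, and the first to do so moves at the first stage.)

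Your fallback attempts head in unproductive directions. The bound $\max(|\mathrm{prot}|,|\mathrm{unprot}|) \geq \lceil \deg(v)/2\rceil$ is, as you note, too weak, and the fix is not a subtler inequality on $v$ alone but the sum $|\mathrm{prot}|+|\mathrm{unprot}| = \deg(v)$ realized by two disjoint groups of searchers. The later-stage analysis of a minimum-degree vertex founders for exactly the reason the paper avoids it: at a stage $t>1$, a protected neighbour need not have been occupied in the initial layout, so it contributes no searcher to the count. Restricting to the first moving stage is what makes ``protected'' equal ``occupied'' and lets the two counts add up to the degree.
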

\begin{proof} Consider a successful layout in the graph $G$ using $d(G)$ searchers, and let $v$ be an occupied vertex whose searchers move in the first round. The number of searchers on $v$ is at least the number of unoccupied vertices adjacent to it. Moreover, by definition each occupied vertex adjacent to $v$ contains at least one searcher.  Hence $d(G)\geq \deg (v) \geq \delta (G)$.
\end{proof}

In the following theorem, we  show that in any graph $G$, there is a successful layout with $d(G)$ searchers in which no vertex is occupied by more than one searcher.  Before proceeding to the theorem, we make the following definition.
\begin{definition}
    A layout on a graph in which each vertex contains at most one searcher is called a {\em standard layout}.
\end{definition}
\begin{theorem} In a graph $G$, there exists a successful standard layout using $d(G)$ searchers. 
\label{4.1.6}
\end{theorem}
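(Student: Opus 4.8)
The plan is to start with any successful layout $L$ on $G$ using exactly $d(G)$ searchers, and to show that if some vertex $v$ carries $k \geq 2$ searchers, we can produce another successful layout with the same number of searchers in which the ``excess'' on $v$ has been reduced by at least one, without creating new multiply-occupied vertices elsewhere. Iterating this over all vertices and all excess searchers then yields a standard layout with $d(G)$ searchers, which is what we want. So the proof is really a local-move argument together with a monotonicity/potential argument to guarantee termination.

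First I would examine what happens to $v$ during the game. Since $L$ is successful, every vertex of $G$ is eventually protected; in particular $v$ becomes fireable at some stage (possibly stage $0$ if it is occupied and has enough searchers relative to its protected neighbours — and indeed it is occupied, so it is automatically protected, but I mean fireable). When $v$ fires, it sends one searcher to each unprotected neighbour, and if it has $k$ searchers and fewer than $k$ unprotected neighbours at that moment, some searchers stay put and are wasted. The key observation is that these ``leftover'' searchers on $v$ contribute nothing: they never move, and $v$ is already protected. So I would try to relocate one such leftover searcher to a vertex that genuinely needs it. The natural candidate is a vertex that, in the run of $L$, only ever gets protected because a searcher passes through or lands on it — but more carefully, I want to find a vertex $u$ which is unoccupied in $L$ and to which I can move one of the excess searchers from $v$ so that the resulting layout is still successful. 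A clean choice: take $u$ to be any unprotected-at-some-point neighbour... actually the cleanest argument is to observe that moving an excess searcher off $v$ onto a neighbour $w$ of $v$ that is a target vertex of $v$ in the run of $L$ can only help: $w$ becomes protected at stage $0$ instead of later, which can only make more vertices fireable earlier, never fewer. One must check that decreasing the searcher count on $v$ by one does not make $v$ non-fireable when it needs to fire — but since $v$ had $k$ searchers and at most $k-1$ unprotected neighbours at its firing stage (that is exactly what it means for a searcher to be left over), removing one searcher still leaves enough to fire, and removing a searcher and simultaneously pre-protecting a neighbour only decreases the number of protected neighbours $v$ must cover. If on the other hand $v$ never has a leftover searcher, then $v$ has at least $k$ neighbours that are unprotected at its firing stage, and one can instead argue that $v$ has at least $k$ neighbours total, so we may move searchers directly to $k-1$ of those neighbours in the initial layout and delete $v$'s occupation down to one searcher; the neighbours being protected from the start only accelerates the process.

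The main obstacle I expect is bookkeeping: making the ``monotonicity'' precise. The right lemma to isolate is a \emph{monotonicity principle}: if $L$ is successful and $L'$ is obtained from $L$ by protecting (occupying) one additional vertex, or by moving a searcher from an over-full vertex to one of its eventual target vertices, then $L'$ is also successful. Intuitively, ``protecting a vertex earlier can never hurt,'' and this should follow by a straightforward induction on stages comparing the set of protected vertices in the run of $L$ with that in the run of $L'$, showing the latter always contains the former. The subtlety is that the firing condition involves the number of \emph{protected} neighbours, so protecting an extra neighbour of a vertex $x$ could in principle make $x$ \emph{harder} to fire; this is why one must move the searcher to a target vertex of the firing vertex specifically, and track that the searcher removed from $v$ is genuinely surplus. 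Once the monotonicity lemma is in hand, the theorem follows by choosing a multiply-occupied vertex $v$, letting $w$ be one of its target vertices in the run of $L$ (or any neighbour, if $v$ is motionless, in which case $v$'s searchers are all pure waste and $v$ can simply be thinned with one searcher pushed to any neighbour), forming $L'$, and repeating; a potential function such as $\sum_{x}\binom{\text{(number of searchers on }x)}{2}$ strictly decreases, ensuring the process halts at a standard layout.
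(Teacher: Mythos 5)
Your proposal is correct and follows essentially the same route as the paper: relocate a surplus searcher from a multiply-occupied vertex to one of its target vertices (necessarily unoccupied initially), verify by induction on stages that the set of protected vertices in the modified game always contains that of the original, and iterate. The one loose end is the motionless case, where ``push to any neighbour'' could land on an occupied vertex and spoil the decrease of your potential function; the paper sidesteps this by noting that a multiply-occupied vertex carrying a motionless searcher contradicts the minimality of $d(G)$, since such a searcher can simply be deleted.
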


\begin{proof}
Suppose that $G$ has a successful layout $L$ using $d(G)$ searchers in which at least one occupied vertex has more than one searcher.  Note that no such vertex can contain motionless searchers, as otherwise we can remove a searcher and still obtain a successful layout.  Among these vertices, choose one, say $v$, whose searchers move at the earliest stage, say stage $s$.  

At stage $s$, the number of searchers on $v$ is equal to the number of unprotected neighbours of $v$, as otherwise $v$ contains a motionless searcher.  So we may choose one of the searchers, $c$, on $v$ and create a new layout $L'$ by moving this searcher to its target, say $u$.

Note that in all stages preceding stage $s$, the movement of searchers in $L'$ is the same as in $L$.  At stage $s$, the searchers on $v$ other than $c$ move to the same targets they would in $L$, and searchers on vertices other than $v$ move as in $L$ unless they would have moved to $u$, in which case they remain motionless.  The searcher $c$ may or may not move from $u$. Thus, at the end of stage $s$, every vertex that is protected in the game with layout $L$ is also protected in the game with layout $L'$.  In each subsequent stage, each searcher who moves in the game with layout $L$ thus either makes the same movement in $L'$, or their target is already protected, in which case they remain motionless.  

We conclude that $L'$ is a successful layout.  By repeating the same process if necessary, we eventually obtain a successful layout with $d(G)$ searchers in which each vertex contains at most one searcher.
\end{proof}

Finally, we conclude this section by noting that certain structural characteristics imply bounds to the deduction number --- namely cut vertices and bridges. 

\begin{theorem}\label{cutvertex}
    Let $G$ be a connected graph with a cut-vertex $u$, and let $G_1, G_2, \ldots, G_t$ be the components of $G-u$.  Then $d(G) \leq 1+\sum_{i=1}^t d(G_i)$.
\end{theorem}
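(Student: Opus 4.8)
The plan is to assemble a successful layout on $G$ out of optimal successful layouts on the pieces of $G-u$, together with a single searcher parked on the cutvertex. For each $i$, let $L_i$ be a successful layout on $G_i$ using $d(G_i)$ searchers (by Theorem~\ref{4.1.6} we may take each $L_i$ to be standard, although this is not needed), and let $L$ be the layout on $G$ whose searchers are those of all the $L_i$ together with one extra searcher on $u$. Then $L$ uses exactly $1+\sum_{i=1}^{t} d(G_i)$ searchers, so it suffices to show that $L$ is successful.

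The central observation is that, since $u$ is occupied in $L$, it is protected from the start and stays protected forever; hence for every vertex $v$ in a component $G_i$ the vertex $u$ never contributes to $v$'s tally of unprotected neighbours. So the set of unprotected neighbours of $v$ during the game on $G$ always lies inside $V(G_i)$ and agrees with the one it would have in the game played on $G_i$ in isolation. First I would treat the case in which the searcher on $u$ never moves: a straightforward induction on the stages then shows that, for each $i$, the game on $G$ restricted to $V(G_i)$ is identical to the game on $G_i$ with layout $L_i$ --- the same vertices are fireable, and the searchers make the same moves. Since each $L_i$ is successful, every vertex of every $G_i$ is eventually protected; together with $u$ this protects all of $G$, so $L$ is successful.

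It remains to handle the searcher on $u$ in case it does move. Because $u$ carries only one searcher, that searcher moves at most once, and it can do so only when $u$ has at most one unprotected neighbour; hence if it moves, it moves into a single component $G_j$, landing on the unique vertex $w\in V(G_j)$ adjacent to $u$ that is still unprotected at that stage. Up to that stage all the component games coincide with the stand-alone games $L_i$, and from then on the only difference between the game on $G$ restricted to $V(G_j)$ and the game $L_j$ is that $w$ has received one extra, now immobile, searcher and so has become protected earlier than it otherwise would. I would then argue, mimicking the stage-by-stage comparison in the proof of Theorem~\ref{4.1.6}, that at every later stage the set of vertices protected in the game on $G$ restricted to $V(G_j)$ contains the set protected in the game $L_j$, and that each searcher which moves in $L_j$ either repeats that move in the game on $G$ or finds its target already protected and stays put. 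Thus $G_j$ is still fully protected, and $L$ is again successful. (Since the searcher on $u$ moves at most once, it disturbs at most one component, and $u$ never receives a searcher because it is already protected.)

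The part I expect to require real care is this last monotonicity claim: that protecting a vertex $w$ of $G_j$ prematurely --- and depositing a spare immobile searcher on it --- never prevents the searchers of $L_j$ from finishing the job on $G_j$. The subtlety is that making $w$ protected early can make a neighbour of $w$ fireable at an earlier stage, at which point that neighbour may have more unprotected neighbours to supply; the thing to check is that whenever a vertex fires earlier it still has enough searchers to reach all of its unprotected neighbours, so that firing earlier protects a superset of what firing later would have. Everything else --- the bookkeeping of how many searchers sit on each vertex of $G_i$, and the verification that $u$ cannot interfere with a second component --- is routine.
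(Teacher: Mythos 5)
Your construction is exactly the paper's: place optimal successful layouts on each component of $G-u$ and one extra searcher on $u$. The paper stops there and asserts success, whereas you supply the verification (including the case where the searcher on $u$ fires into some $G_j$ and the monotonicity claim that protecting a vertex early never hurts, which does go through by the same stage-by-stage induction as in Theorem~\ref{4.1.6}); your argument is correct and, if anything, more complete than the published one.
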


\begin{proof}
    Construct a successful layout on $G$ as follows.  Place searchers on the vertices of the $G_i$ according to optimal successful layouts in each, and place one additional searcher on $u$.
\end{proof}

In fact, we can normally do a little bit better, but the proof requires the introduction of a variation of deduction. For a proof, see \cite{MozhganThesis}.

\begin{theorem} \cite{MozhganThesis} Let $G_1$ and $G_2$ be graphs containing the vertices $v_{1}$ and $v_{2}$ respectively. If $G$ is the graph formed by identifying the vertices $v_1$ and $v_2$ and $G'$ is the graph formed by adding the edge $v_1v_2$, then
$$d(G) \leq d(G_{1})+ d(G_{2})$$
and
$$d(G') \leq d(G_{1})+d(G_{2}).$$
\label{7.1.1}
\end{theorem}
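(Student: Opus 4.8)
The plan is to construct a successful layout on $G$ (respectively $G'$) with $d(G_1)+d(G_2)$ searchers by superimposing optimal successful layouts on $G_1$ and $G_2$, and then to verify success by coupling the deduction game on the combined graph with the games on $G_1$ and $G_2$ separately. By Theorem~\ref{4.1.6} we may take standard successful layouts $L_1$ on $G_1$ and $L_2$ on $G_2$ using $d(G_1)$ and $d(G_2)$ searchers. For $G$, place the $L_1$-searchers on the copy of $G_1$ and the $L_2$-searchers on the copy of $G_2$, letting the identified vertex $v$ carry the (at most two) searchers that lay on $v_1$ and on $v_2$; for $G'$ we simply use $L_1\cup L_2$, since $v_1$ and $v_2$ remain distinct. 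Either way the total is $d(G_1)+d(G_2)$, so it suffices to show that the combined layout is successful.

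The verification proceeds by induction on the stage number, carrying the invariant that every vertex protected by stage $k$ in the $L_i$-game is protected by stage $k$ in the combined game, and that no searcher which moves in an $L_i$-game is ever flummoxed out of moving in the combined game. The key observation is that every vertex of $G_i$ other than $v_i$ keeps exactly its $G_i$-neighbourhood in the combined graph, and that acquiring extra (or earlier) protected neighbours never decreases fireability; hence, away from the junction, each side propagates precisely as in its own game, and only the junction can misbehave — because in $G$ the vertex $v$ inherits the union of the two neighbourhoods, while in $G'$ each of $v_1,v_2$ gains one new neighbour. A useful structural fact for organising the junction analysis is that in a standard successful layout any vertex whose searcher moves has degree at most $2$ (a single searcher fires to at most one unprotected neighbour, and fireability then permits at most one protected neighbour), and more generally a firing vertex has degree at most twice its searcher count. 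One then splits by the status of each $v_i$: unoccupied, occupied and motionless, or occupied with a moving searcher. If $v_1$ and $v_2$ are both occupied, the searcher counts at $v$ add, $v$ is protected from the start, and the two fireability inequalities (one for $v_1$ in $L_1$, one for $v_2$ in $L_2$) combine to exactly the condition needed for $v$ to fire onto all the required targets once its non-target neighbours have settled; if both are unoccupied, $v$ is just protected by whichever neighbour first fires into it and needs to do nothing further.

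The main obstacle is the remaining mixed and higher-degree cases — for instance $v_1$ occupied with a moving searcher (or occupied and motionless of degree at least $2$) while the other factor's junction has positive degree: here the extra protected neighbours acquired at the junction can push the junction vertex below its firing threshold, so the target of a searcher that moves in $L_1$ is never protected, and the naive superposition genuinely fails. Handling these cases appears to require either a careful local rearrangement of the layout on the offending factor (re-choosing $L_2$, or sliding a searcher onto its intended target and re-securing protection of the junction from outside) or, as the authors do in \cite{MozhganThesis}, an auxiliary variant of deduction that bookkeeps the ``one unit of help'' each junction receives from the other side; I expect this bookkeeping to be the technical heart of the argument. Once all junction cases are settled, both bounds $d(G)\le d(G_1)+d(G_2)$ and $d(G')\le d(G_1)+d(G_2)$ follow from the single coupling.
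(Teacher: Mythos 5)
You have not given a proof: you set up the natural superposition of optimal standard layouts (via Theorem~\ref{4.1.6}) and the stage-by-stage coupling, you correctly dispose of the easy junction configurations, and then you explicitly leave the remaining cases open, conceding that ``the naive superposition genuinely fails'' when a junction vertex carrying a moving searcher acquires extra unprotected neighbours from the other factor, and that resolving this ``appears to require'' either a local rearrangement or an auxiliary variant of deduction. That concession is exactly the gap. The difficulty is real: for instance, when the identified vertex $v$ carries the two searchers from $v_1$ and $v_2$, it fires as soon as its \emph{total} number of unprotected neighbours drops to two, and nothing in your argument prevents both searchers from being sent into the same factor, leaving the other factor's intended target (which in $L_2$ is protected only by the searcher on $v_2$) permanently unprotected; similarly a single searcher on $v_1$ can be blocked, or redirected to $v_2$ instead of its $L_1$-target, and the downstream cascade in $G_1$ then need not recover. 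Your case analysis does not rule these scenarios out, so the bound is not established.

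For what it is worth, the paper itself does not prove this theorem either: it is quoted from \cite{MozhganThesis}, and the authors explicitly remark that the proof ``requires the introduction of a variation of deduction'' (a bookkeeping device recording that each factor may receive one unit of outside help at the junction). So your diagnosis of where the technical heart lies is accurate, and your superposition-plus-coupling framework is the right starting point; but as submitted, the argument stops precisely at the step that needs the new idea. To make this rigorous you would need either to develop that ``assisted deduction'' variant and prove the two factor games can be interleaved through it, or to exhibit, in each problematic junction configuration, an explicit modification of $L_1$ or $L_2$ (with the same number of searchers) for which the coupling does go through.
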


\section{The deduction number on some classes of graphs}
This section determines the deduction number for some classes of graphs, including cycles, paths, complete graphs, complete bipartite graphs and wheels. 

\begin{theorem} If $P_{n}$ is a path of order $n$, then $d(P_{n})=\lceil \frac{n}{2}\rceil$.
\label{CO_3}
\end{theorem}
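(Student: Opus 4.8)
The plan is to establish the two inequalities $d(P_n)\leq\lceil n/2\rceil$ and $d(P_n)\geq\lceil n/2\rceil$ separately. The lower bound is immediate: by Theorem~\ref{Th_1}, $d(P_n)\geq\lceil n/2\rceil$ for any graph of order $n$, so nothing further is needed. Hence the whole content is the matching upper bound, for which it suffices to exhibit a successful layout on $P_n$ using exactly $\lceil n/2\rceil$ searchers. Write $P_n = v_1v_2\cdots v_n$.

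For the upper bound I would split into the two parity cases. When $n$ is even, place one searcher on each of $v_1, v_3, v_5, \ldots, v_{n-1}$ — that is, on every odd-indexed vertex — for a total of $n/2$ searchers. Each occupied vertex $v_{2i-1}$ (for $2\leq i$) has both of its neighbours unprotected initially except possibly $v_1$, which has only one neighbour; a cleaner approach is to note that $v_1$ is fireable at stage $1$ (it has one searcher and its single neighbour $v_2$ is unprotected), so its searcher moves to $v_2$. Once $v_2$ is protected, $v_3$ has exactly one unprotected neighbour ($v_4$) and one searcher, so it fires, and so on down the path: the protection propagates from left to right, with the searcher on $v_{2i-1}$ eventually moving to $v_{2i}$. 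After all stages, every vertex is protected, so the layout is successful. When $n$ is odd, place one searcher on each of $v_1, v_3, \ldots, v_{n-2}$ and one on $v_n$, giving $(n-1)/2 + 1 = \lceil n/2\rceil$ searchers; the same left-to-right cascade protects $v_2, v_4, \ldots, v_{n-1}$, and $v_n$ is already occupied, so again the layout is successful.

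I expect the only real care needed is in verifying the firing cascade rigorously: one must check by induction on the stage number that after stage $k$ the vertices $v_1, \ldots, v_{2k}$ (in the even case) are all protected and that $v_{2k+1}$ is then fireable at stage $k+1$ because its only unprotected neighbour is $v_{2k+2}$. This is routine but is the step where the definitions (fireable, protected, target) must be applied literally. An alternative to the explicit cascade argument is to invoke Theorem~\ref{cutvertex} or Theorem~\ref{7.1.1} inductively — writing $P_n$ as $P_2$ joined to $P_{n-2}$ at a vertex — together with the base cases $d(P_1)=?$ (a trivial graph, handled separately or excluded) and $d(P_2)=1$, $d(P_3)=2$; but since the layout is so explicit, I would prefer to give it directly and simply remark that the propagation works. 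The main obstacle, such as it is, is purely bookkeeping: making sure the boundary vertices $v_1$ and $v_n$ (degree $1$) are handled correctly so that the cascade actually starts and terminates, rather than stalling with flummoxed searchers in the middle.
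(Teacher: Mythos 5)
Your proposal is correct and follows essentially the same route as the paper: the lower bound is quoted from Theorem~\ref{Th_1}, and the upper bound comes from an explicit layout whose firing cascade is routine to verify. Your odd-case layout (searchers on all odd-indexed vertices, so the cascade runs in from both ends) differs superficially from the paper's (a searcher on $v_1$ and on each even-indexed vertex), but both are valid and use the same number of searchers.
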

\begin{proof} It is easy to see that $d(P_{1}) = d(P_{2}) = 1$. For $n \geq 3$, by Theorem~\ref{Th_1}, $d(P_n) \geq \lceil \frac{n}{2}\rceil$.  For the upper bound, let $P_n$ be the path $v_1v_2\cdots v_n$.  If $n$ is even, place a searcher on each vertex of odd index.  If $n$ is odd, place a searcher on $v_1$ and on each vertex of even index.  It is straightforward to verify that in either case we obtain a successful layout with $\lceil\frac{n}{2}\rceil$ searchers.  
\end{proof}
The same result holds for cycles.

\begin{theorem} If $C_{n}$ is a cycle of order $n \geq 3$, then $d(C_{n})=\lceil \frac{n}{2}\rceil$.
\label{4.2.2}
\end{theorem}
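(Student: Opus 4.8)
The plan is identical in spirit to the proof of Theorem~\ref{CO_3}: the lower bound is free, and the work is an explicit layout. Since $C_n$ has order $n$, Theorem~\ref{Th_1} gives $d(C_n)\ge\lceil n/2\rceil$ at once, so only the bound $d(C_n)\le\lceil n/2\rceil$ needs proof.

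For the upper bound I would generalise the layout used for $C_8$ in the worked example of Section~2. Writing $C_n$ as $v_1v_2\cdots v_nv_1$, place one searcher on each of $v_1,v_2,v_4,v_6,\dots,v_{2\lceil n/2\rceil-2}$; a quick count shows this uses exactly $\lceil n/2\rceil$ searchers, and for $n\in\{3,4\}$ it degenerates gracefully to the two searchers on $v_1$ and $v_2$. Now trace the deduction process. At the first stage $v_1$ and $v_2$ each have exactly one protected neighbour (each other) and one unprotected neighbour, so they fire, sending a searcher to $v_n$ and to $v_3$ respectively; every other occupied vertex has two unprotected neighbours and is flummoxed. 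The claim to establish is that a ``wave'' of single firings now sweeps the rest of the cycle: I would show by induction on the stage that the occupied vertex (or, for odd $n$, the two occupied vertices) adjacent to the current frontier of the protected block has exactly one unprotected neighbour, fires into it, and thereby extends the block by one vertex. For even $n$ the block $v_n,v_1,v_2,v_3,v_4$ grows in a single direction and closes up after $\lceil n/2\rceil-1$ stages; for odd $n$ the searcher sent from $v_1$ protects $v_n$ early, which also makes $v_{n-1}$ fireable at the second stage, so two waves propagate toward each other and meet in the middle (possibly both targeting the same vertex at the last stage, which is harmless). Either way every vertex is eventually protected, so the layout is successful.

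The only real care needed is the bookkeeping in this sweep: one must verify that no interior occupied vertex fires prematurely --- each keeps two unprotected neighbours until the wave reaches it --- and that the parity works out so the wave covers the final vertex rather than stalling one short. This is essentially the verification already carried out for $C_8$, so I do not expect a genuine obstacle; it suffices to check the small cases $n=3,4,5,6$ directly and then run the induction for $n\ge 7$, splitting into the even and odd cases as above.
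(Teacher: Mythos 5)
Your proposal is correct and follows essentially the same approach as the paper: the lower bound from Theorem~\ref{Th_1} plus an explicit alternating layout seeded by the adjacent pair $v_1,v_2$, verified by tracing the wave of firings. The only difference is cosmetic --- in the odd case the paper places the last searcher on $v_n$ (so a single wave sweeps one way), whereas you keep it on $v_{n-1}$ and let two waves meet in the middle; both verifications go through.
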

\begin{proof} By Theorem~\ref{Th_1}, $d(C_n) \geq \lceil\frac{n}{2}\rceil$. For the upper bound, consider the cycle $C_{n}=(v_1, v_2, \ldots, v_n)$. If $n=3$ or $n=4$, then we place a single searcher on each of two vertices $v_{1}$ and $v_{2}$. In either case, this is a successful layout with $\lceil \frac{n}{2}\rceil$ searchers.

For $n\geq 5$, we consider two cases according to the parity of $n$.  If $n$ is even, place a searcher on each of $v_1, v_2, v_4, v_6, \ldots, v_{n-2}$.  In the first stage, the searchers on $v_1$ and $v_2$ move to $v_{n}$ and $v_3$, respectively.  In subsequent stages, the searcher on $v_{2i}$ ($i \geq 2$) will move to $v_{2i+1}$.  Thus, this layout is successful.

If $n$ is odd, place a searcher on each of $v_1, v_2, v_4, v_6, \ldots, v_{n-3}$ and $v_n$.  In the first stage, the searchers on $v_n$ and $v_2$ move to $v_{n-1}$ and $v_3$, respectively. As in the previous case, in subsequent stages, the searcher on $v_{2i}$ ($i \geq 2$) moves to $v_{2i+1}$, and we again have found a successful layout with $\lceil \frac{n}{2}\rceil$ searchers.
\end{proof}
Consider now the wheel $W_n$ of order $n$.  Theorems~\ref{4.1.1} and~\ref{MinDegree} together imply that $d(W_4)=3$.  For $n>4$, we show in the next theorem that the wheel meets the lower bound from Theorem~\ref{Th_1}.

\begin{theorem} If $W_{n}$ is a wheel of order $n\geq 5$, then $d(W_{n})= \lceil \frac{n}{2}\rceil$.
\label{4.2.3.1}
\end{theorem}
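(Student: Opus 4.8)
The plan is to prove $d(W_n) = \lceil n/2 \rceil$ for $n \geq 5$ by exhibiting an explicit successful standard layout of that size, since the lower bound $d(W_n) \geq \lceil n/2 \rceil$ is immediate from Theorem~\ref{Th_1}. Write $W_n$ as a hub vertex $h$ joined to a rim cycle $C_{n-1} = (v_1, v_2, \ldots, v_{n-1})$. The governing intuition is that the hub has degree $n-1$, so if the hub is \emph{unoccupied} in the layout it is extremely hard to protect (we would need almost all rim vertices occupied before any single one can fire into the hub), whereas if the hub \emph{is} occupied, then every rim vertex automatically has one protected neighbour (the hub), which makes rim vertices fireable much more cheaply. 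So I would put one searcher on the hub, and then the rim becomes, roughly, a cycle in which every vertex already has one ``free'' protected neighbour.

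First I would place a searcher on $h$ and on a carefully chosen set of $\lceil n/2 \rceil - 1$ rim vertices. For $n-1$ even (i.e. $n$ odd), occupy $h$ together with $v_1, v_3, v_5, \ldots, v_{n-2}$ (every other rim vertex): that is $1 + \frac{n-1}{2} = \frac{n+1}{2} = \lceil n/2 \rceil$ searchers, and every rim vertex is either occupied or adjacent to two occupied rim vertices, so protection of the rim is essentially instantaneous in stage one. For $n-1$ odd (i.e. $n$ even), occupy $h$ together with $v_1, v_2, v_4, v_6, \ldots, v_{n-2}$, mirroring the cycle construction in Theorem~\ref{4.2.2}: this is $1 + \frac{n}{2} - 1 + \ldots$ — here I would count carefully to land exactly on $\lceil n/2 \rceil = n/2$ searchers — and then argue as in the cycle proof that the searchers on $v_1, v_2$ fire first (each has the hub plus one rim neighbour already protected, so being fireable needs only that one searcher cover the single remaining unprotected rim neighbour), after which the even-indexed searchers cascade around the rim.

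The key verification steps, in order, are: (1) confirm the searcher count is exactly $\lceil n/2 \rceil$ in each parity case; (2) check that in the initial layout at least one rim vertex is fireable — this uses that its only potentially-unprotected neighbours at the start are its two rim neighbours, and we have chosen the occupied set so that at least one occupied rim vertex has at most one unprotected rim neighbour; (3) run the cascade: show that once stage one completes, the protected rim vertices form an interval (or union of intervals) that grows to cover the whole rim, exactly as in the cycle argument, with the hub being protected from the outset by its own searcher; (4) handle the small/boundary values of $n$ (e.g. $n = 5, 6$) directly if the general pattern is awkward there. I expect the main obstacle to be step (2) combined with the bookkeeping in step (1): making sure the chosen occupied rim set simultaneously has the right cardinality, contains a vertex that can fire immediately given that the hub is already protected, and leaves the remaining rim positions reachable by a clean left-to-right cascade — the parity cases pull the construction in slightly different directions, and one must be careful that occupying the hub does not accidentally make some rim vertex \emph{un}-fireable by turning a would-be-unprotected neighbour into a protected one that then counts against the fireability threshold. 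A clean way to manage this is to reuse the cycle layouts from Theorem~\ref{4.2.2} on the rim verbatim and simply observe that adding a searcher on the hub only helps: it protects the hub immediately and can only decrease the number of unprotected neighbours of each rim vertex, so every rim vertex that fired in the $C_{n-1}$ game still fires (in the same or an earlier stage) in the $W_n$ game.
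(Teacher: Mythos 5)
There is a genuine gap here --- two, in fact. First, your primary layout for odd $n$ (the hub $h$ plus the alternating rim set $v_1, v_3, \ldots, v_{n-2}$) deadlocks: each occupied rim vertex carries one searcher but has \emph{two} unprotected neighbours (its two rim neighbours, both even-indexed and unoccupied), and the hub searcher has $(n-1)/2 \geq 2$ unprotected neighbours, so no vertex is fireable at stage one and the game stalls. The claim that ``protection of the rim is essentially instantaneous in stage one'' conflates ``adjacent to occupied vertices'' with ``protected'': an occupied neighbour can only protect you once it is itself fireable. This is precisely why the cycle construction in Theorem~\ref{4.2.2} seeds the cascade with two \emph{adjacent} searchers rather than a perfect alternation. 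Your fallback --- reuse the $C_{n-1}$ layout of Theorem~\ref{4.2.2} verbatim and add a hub searcher --- does repair the odd case, since $1 + \lceil (n-1)/2\rceil = (n+1)/2 = \lceil n/2 \rceil$ when $n$ is odd, and you are right that occupying the hub can only make rim vertices fireable sooner.

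Second, and more seriously, the even case does not close. When $n$ is even the rim $C_{n-1}$ is an odd cycle, so occupying the hub plus any successful rim layout costs at least $1 + \lceil (n-1)/2 \rceil = 1 + n/2$ searchers, one more than the target $n/2$; your written set $h, v_1, v_2, v_4, \ldots, v_{n-2}$ likewise has $n/2+1$ elements, and the count is left as an unresolved ``$1+\frac{n}{2}-1+\ldots$''. To land on $n/2$ you must either leave the hub unoccupied and let a rim searcher fire into it --- this is what the paper does in \emph{both} parities (for even $n$ it occupies $v_1, v_3, \ldots, v_{n-5}, v_{n-2}, v_{n-1}$ on the rim and never touches the hub) --- or keep the hub searcher but make it earn its keep by firing \emph{out} to the one rim vertex the remaining $n/2-1$ rim searchers cannot reach (e.g.\ occupy $h, v_1, v_2, v_4, \ldots, v_{n-4}$; after the rim cascade only $v_{n-2}$ is unprotected, so the hub becomes fireable and covers it). Either fix works, but as written your proposal establishes the upper bound only for odd $n$, and only via the fallback rather than the layout you actually specify.
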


\begin{proof} 

The lower bound follows from Theorem~\ref{Th_1}.  To prove the upper bound, 
label the vertices $v_{0},v_{1},$ $ \ldots,v_{n-1}$, where $v_0$ is adjacent to each other vertex, $v_{n-1}$ is adjacent to $v_0$ and $v_i$ is adjacent to $v_{(i+1)}$ for $i \in \{1, \ldots, n-2\}$. We give a successful layout using $\lceil\frac{n}{2}\rceil$ searchers. 
 
If $n$ is odd, place a searcher on each vertex $v_1$, $v_{n-2}$ and $v_{n-1}$.  For $n \geq 7$, also place a searcher on each vertex $v_2, v_4, v_6, \ldots, v_{n-5}$.  
If $n$ is even, place a searcher on each vertex $v_1, v_3, \ldots, v_{n-5}$, as well as on $v_{n-2}$ and $v_{n-1}$.
In either case, it is straightforward to verify that this layout is successful.

\end{proof}

We next consider the deduction game on complete graphs.  
\begin{theorem} \label{Th_4}If $K_{n}$ is a complete graph of order $n \geq 2$, then $d(K_{n})=n-1$.
\end{theorem}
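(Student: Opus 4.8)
The upper bound $d(K_n)\leq n-1$ is immediate from Theorem~\ref{4.1.1}, so the entire content is the lower bound $d(K_n)\geq n-1$, which I would obtain by analysing the first stage of the game. Fix an arbitrary successful layout $L$ on $K_n$ with $k$ searchers, let $m$ be the number of occupied vertices, and for an occupied vertex $v$ let $s_v$ be the number of searchers on $v$, so $k=\sum_v s_v\geq m$. If $m=n$ then $k\geq n\geq n-1$ and we are done, so assume $m\leq n-1$; then the $n-m\geq 1$ unoccupied vertices are exactly the unprotected vertices at the start of the game.

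Since $L$ is successful but not all vertices are initially protected, some searcher must move. I would first observe that the first stage at which any searcher moves must be stage $1$: before any searcher moves the protected set is frozen (it equals the occupied set) and every searcher is still mobile, so if no vertex is fireable at stage $1$ then the whole configuration is frozen forever. Now let $v$ be a vertex whose searchers move at stage $1$. Because $K_n$ is complete, $v$ is adjacent to every other vertex, so its unprotected neighbours are precisely the $n-m$ unoccupied vertices, and fireability of $v$ forces $s_v\geq n-m$. Since each of the remaining $m-1$ occupied vertices carries at least one searcher,
\[
k=\sum_w s_w\ \geq\ s_v+(m-1)\ \geq\ (n-m)+(m-1)\ =\ n-1 .
\]
Combined with the upper bound, this gives $d(K_n)=n-1$.

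A slightly slicker route is to invoke Theorem~\ref{4.1.6} and restrict attention to standard layouts: there $s_v=1$ for every occupied vertex, so a vertex can be fireable at stage $1$ only when $1\geq n-m$, i.e.\ $m\geq n-1$, whence any successful standard layout occupies at least $n-1$ vertices. I do not expect a serious obstacle here; the only points needing care are the claim that the game cannot ``unfreeze'' at a later stage --- which is just monotonicity of the protected set --- and the handling of the trivial degenerate cases ($m=n$, and $n=2$, where there may be no occupied vertex at all when $k\leq n-2=0$).
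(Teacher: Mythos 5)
Your proof is correct and is essentially the paper's argument: the paper proves this theorem as a one-line consequence of Theorem~\ref{4.1.1} (upper bound) and Theorem~\ref{MinDegree} (lower bound), and your first-stage counting argument --- $s_v \geq n-m$ on the first fireable vertex plus one searcher on each of the other $m-1$ occupied vertices, all of which are neighbours of $v$ in $K_n$ --- is precisely the proof of Theorem~\ref{MinDegree} specialized to the complete graph, where $\delta(K_n)=n-1$. So the whole lower bound could be replaced by a citation of that theorem.
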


\begin{proof} This is a direct consequence of Theorems~\ref{4.1.1} and~\ref{MinDegree}.
\end{proof}

The following result considers the deduction number of a graph containing a clique of order $m$.  Combined with Theorem~\ref{Th_4}, it gives a bound on the deduction number of a graph based on its clique number.
\begin{theorem} If $G$ is a graph of order $n$ and $K_{m}$ is a subgraph of $G$ of order $m<n$, then $d(K_{m}) \leq d(G) $.
\label{4.2}
\end{theorem}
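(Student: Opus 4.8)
The plan is to show that any successful layout $L$ on $G$, restricted appropriately to the clique $K_m$, yields a successful layout on $K_m$, so that $d(K_m) \le |L| = d(G)$. The key observation is that by Theorem~\ref{Th_4} we know $d(K_m) = m-1$, so it is equivalent (and likely cleaner) to prove the contrapositive-flavoured statement: if $G$ has a successful layout with at most $m-2$ searchers, we derive a contradiction. Either formulation reduces to a counting argument on the vertices of the clique.

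First I would invoke Theorem~\ref{4.1.6} to assume without loss of generality that the successful layout $L$ on $G$ uses $d(G)$ searchers and is standard, i.e.\ each vertex carries at most one searcher. Let $V(K_m) = \{u_1, \ldots, u_m\}$ and consider the run of the deduction game from $L$. Since $L$ is successful, every vertex of $G$ — in particular every vertex of the clique — eventually becomes protected. Now look at the first stage at which some vertex $u_i$ of the clique becomes protected by a searcher \emph{moving} onto it (if some clique vertex is occupied at the start, handle that as the base case). Consider instead the last clique vertex to become protected, call it $u$; at the stage just before $u$ is protected, all other $m-1$ vertices of the clique are already protected. The searcher $s$ that protects $u$ moves from some vertex $w$ onto $u$; for $w$ to be fireable, $w$ must carry at least as many searchers as it has protected neighbours.

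The main obstacle — and the heart of the argument — is to leverage the clique structure to force a lower bound of $m-1$ on the number of searchers. The cleanest route is: among the $m$ vertices of the clique, consider the one, say $u$, whose searcher moves \emph{latest} (or which is protected latest), and trace back. At the moment right before $u$ gets its searcher, the other $m-1$ clique vertices are all protected, hence they are all protected neighbours of any clique vertex. So the vertex $w$ from which $u$'s searcher departs, if $w$ is itself in the clique, needs at least $m-1$ searchers on it (its $m-1$ clique-neighbours are all protected), contradicting standardness unless $m-1 = 1$. If $w$ is outside the clique, one instead argues that each of the $m-1$ already-protected clique vertices was protected either by being initially occupied or by a searcher moving onto it, and these searchers are distinct; together with $s$ this gives $m-1$ searchers whose final positions (or initial positions) are pairwise distinct, but one must rule out double-counting carefully — for instance a single searcher cannot protect two clique vertices since in a standard layout it occupies one vertex and moves to at most one other, and two clique vertices protected ``by moving'' from a common source would again require that source to be fireable with many protected neighbours. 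Assembling these cases shows at least $m-1$ searchers are needed among those ``responsible'' for the clique, giving $d(G) \ge m-1 = d(K_m)$.

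An alternative, possibly slicker, approach is to note that Theorem~\ref{MinDegree} gives $d(H) \ge \delta(H)$ and that $K_m$ has minimum degree $m-1$; so it suffices to exhibit within any successful layout on $G$ a vertex of the clique playing the role of the ``first mover'' whose searcher count is at least its number of \emph{protected clique-neighbours}, which becomes $m-1$ once all other clique vertices are protected. The delicate point is the ordering: we want a clique vertex that fires only after all its clique-neighbours are protected, and such a vertex need not exist if clique vertices fire early. Resolving this — perhaps by considering the clique vertex that is protected last and the vertex that sends a searcher to it — is exactly where the care is required, and I expect that to be the step consuming most of the proof's length.
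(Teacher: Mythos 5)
Your reduction to showing $d(G) \ge m-1$ (via Theorems~\ref{Th_4} and~\ref{4.1.6}) is sound, and you have correctly located the crux: bounding how many clique vertices can be protected ``from inside'' the clique. But the argument you give there is broken, for two reasons. First, you have the fireability condition backwards: a vertex fires when its searcher count is at least the number of its \emph{unprotected} neighbours (this is what the definition of \emph{flummoxed} and the $C_8$ example force, notwithstanding the slip in the paper's stated definition of fireable). So your claim that a clique vertex $w$ whose $m-1$ clique-neighbours are all protected ``needs at least $m-1$ searchers'' is exactly wrong --- such a $w$, with $u$ as its only unprotected neighbour, fires with a single searcher. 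The case you declare contradictory is not a contradiction at all: it occurs in $K_m$ itself with $m-1$ searchers, where the last vertex is protected by a searcher moving from inside the clique. Second, your double-counting discussion asserts that a single searcher cannot protect two clique vertices; that is false (a searcher starting on one clique vertex and moving to another protects two), and if it were true your count would yield $d(G)\ge m$, which is too strong and fails for $G=K_m$.

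What actually must be shown --- and what the paper's proof establishes --- is that \emph{at most one} unoccupied clique vertex is protected by a searcher moving from another clique vertex: if $u_1,u_2$ were two such vertices, their sources are distinct by standardness, and whichever source fires first (or either, if simultaneous) still sees both $u_1$ and $u_2$ as unprotected clique-neighbours, so with only one searcher it is flummoxed. Granting this, the occupied clique vertices together with the clique vertices targeted from outside the clique form a set of size at least $m-1$, corresponding to at most $d(G)$ distinct searchers of $L$; the paper packages this as an explicit successful layout $L'$ on $K_m$. Your sketch never establishes this key lemma, and the steps you do write down argue in the wrong direction, so as it stands the proof does not go through.
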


\begin{proof} Let $L$ be a successful layout $L$ with $d(G)$ searchers. By Theorem~\ref{4.1.6}, we may assume that $L$ is a standard layout.  We construct a successful layout in a subgraph isomorphic to $K_{m}$ with at most $d(G)$ searchers.  

Let $H$ be a subgraph of $G$ with $m$ mutually adjacent vertices.  Construct a new layout $L'$ on $H$ as follows.  Each vertex of $H$ which is occupied in $L$ remains occupied in $L'$.  Additionally, place a searcher on each vertex of $H$ which is a target of a vertex in $V(G) \setminus V(H)$.  Note that $L'$ is a standard layout on $H$ which contains at most $d(G)$ searchers.

In the new layout $L'$, if all vertices of $H$ are occupied, then $L'$ is successful. Otherwise, let $v_1$ be an unoccupied vertex in $K_{m}$ in $L'$. Then $v_1$ must be unoccupied in $L$, and by construction of $L'$, the searcher who moves to $v$ in the deduction game on $G$ must be on another vertex of $H$. 
Suppose $H$ has a second unoccupied vertex, $v_2$. In order for the layout $L$ in $G$ to have been successful, since $v_1$ and $v_2$ are both protected by searchers on vertices in $H$, there must be a vertex of $H$ with more than one searcher; otherwise, each searcher on a vertex of $H$ has at least two unoccupied adjacent vertices and remains flummoxed throughout the deduction game on $G$. 
Thus, $v_1$ is the only unoccupied vertex in $L'$, so $L'$ is successful. 
\end{proof}

\begin{corollary} If $G$ is a graph, then $d(G) \geq \omega(G) -1$.
\end{corollary}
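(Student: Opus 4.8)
The plan is to combine Theorem~\ref{4.2} with Theorem~\ref{Th_4} directly. If $G$ has clique number $\omega(G) = m$, then by definition $G$ contains $K_m$ as a subgraph. First I would dispose of the trivial case: if $\omega(G) = 1$, then $G$ has no edges and the inequality $d(G) \geq 0$ holds vacuously (or trivially, since $d(G) \geq 1$ for any nonempty graph). Similarly, if $\omega(G) = n$, then $G = K_n$ and Theorem~\ref{Th_4} gives $d(G) = n - 1 = \omega(G) - 1$ with equality.

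In the main case, suppose $1 < m < n$, so that $K_m$ is a subgraph of $G$ of order $m < n$. Then Theorem~\ref{4.2} applies and yields $d(K_m) \leq d(G)$. By Theorem~\ref{Th_4}, $d(K_m) = m - 1 = \omega(G) - 1$. Chaining these gives $d(G) \geq d(K_m) = \omega(G) - 1$, as desired.

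I do not anticipate any real obstacle here; the corollary is essentially a one-line consequence of the two cited theorems, and the only mild subtlety is that Theorem~\ref{4.2} is stated with the hypothesis $m < n$, so the case $\omega(G) = n$ (i.e. $G$ complete) must be handled separately via Theorem~\ref{Th_4}. One could also phrase the whole argument uniformly by noting that if $\omega(G) = n$ then $d(G) = n-1 \geq \omega(G)-1$, and if $\omega(G) < n$ then Theorem~\ref{4.2} applies; either way the bound follows.
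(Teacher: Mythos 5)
Your proof is correct and is exactly the argument the paper intends (the corollary is stated without proof as an immediate consequence of Theorems~\ref{4.2} and~\ref{Th_4}). Your explicit handling of the edge cases $\omega(G)=1$ and $\omega(G)=n$, where the hypothesis $m<n$ of Theorem~\ref{4.2} fails, is a careful touch the paper leaves implicit.
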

Note that for an arbitrary (non-complete) subgraph $H$ of $G$, it is not necessarily true that $d(H) \leq d(G)$.  For example, as the next result shows, stars, which are subgraphs of wheels, provide a counterexample.

\begin{theorem} If $S_{n}$ is a star of order $n \geq 2$, then $d(S_{n})=n-1$.
\label{4.2.5.1}
\end{theorem}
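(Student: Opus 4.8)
The plan is to establish the two inequalities $d(S_n) \le n-1$ and $d(S_n) \ge n-1$ separately. The upper bound is immediate from Theorem~\ref{4.1.1}, since $S_n$ is a non-trivial graph of order $n$. So the real work is the lower bound, $d(S_n) \ge n-1$.

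For the lower bound, let $c$ denote the center of the star and $\ell_1, \ell_2, \ldots, \ell_{n-1}$ the leaves. By Theorem~\ref{4.1.6}, it suffices to consider a successful standard layout $L$, so each vertex carries at most one searcher. First I would observe that the only vertex of $S_n$ with degree greater than $1$ is the center $c$ (here we use $n \ge 3$; the case $n = 2$ is trivial since $d(S_2) = d(P_2) = 1$), so every leaf has degree $1$. The key point is that in a standard layout a leaf $\ell_i$ can only ever become protected in one of two ways: either $\ell_i$ is occupied in $L$, or the searcher on $c$ moves to $\ell_i$. Since there is at most one searcher on $c$, at most one leaf can be protected by a move from the center. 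I would then argue that an unoccupied leaf cannot protect anything new: a searcher cannot move from a leaf unless that leaf is occupied, and even then it can only move to $c$. Hence, counting: among the $n-1$ leaves, at least $n-2$ must be occupied in $L$ (all but possibly the one that receives the searcher from $c$), and for that last leaf to get protected we additionally need a searcher on $c$ itself that actually fires. This forces at least $n-2$ searchers on leaves plus $1$ on the center, giving $d(S_n) \ge n-1$.

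The one subtlety to handle carefully is the possibility that \emph{every} leaf is occupied but $c$ is not: then we already have $n-1$ searchers, which is fine, but we should check this layout is indeed successful (it is — each occupied leaf $\ell_i$ has its unique neighbour $c$ unprotected initially, so $\ell_i$ is fireable, one searcher moves to $c$, and $c$ becomes protected), confirming the bound is tight and consistent. Conversely, if some leaf $\ell_j$ is unoccupied, then as argued it must be the target of the searcher on $c$, so $c$ is occupied and all other $n-2$ leaves must be occupied (none of them can be protected except by being occupied, since $c$'s lone searcher is committed to $\ell_j$), again yielding $n-1$ searchers. The main obstacle is simply being meticulous about the case analysis on whether $c$ is occupied and exactly which leaf (if any) is unoccupied, and in each case verifying both that a valid successful layout needs $\ge n-1$ searchers and — for tightness — exhibiting one with exactly $n-1$; but there are no deep difficulties here, as the star's structure is extremely rigid.
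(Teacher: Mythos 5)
Your proof is correct and takes essentially the same route as the paper: the upper bound is Theorem~\ref{4.1.1}, and your leaf-counting argument for the lower bound is exactly the proof of Theorem~\ref{4.1} (the bound $d(G) \geq p$ for a connected graph with $p$ vertices of degree $1$) specialized to the star, which the paper simply cites. You could therefore compress the whole argument to a one-line appeal to Theorems~\ref{4.1.1} and~\ref{4.1}.
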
 

\begin{proof} 
This is a direct consequence of Theorems~\ref{4.1.1} and~\ref{4.1}.
\end{proof}

As the star $S_n$ is isomorphic to $K_{1,n-1}$, it is natural to consider the deduction number of complete bipartite graphs and complete multipartite graphs more generally.
\begin{theorem}\label{thm:multipartite}
Let $G=K_{n_1, n_2, \ldots, n_m}$ be the complete multipartite graph with $m \geq 2$ parts of sizes $n_i$, $1 \leq i \leq m$, and let $N=\sum_{i=1}^m n_i$.  Then
\[
d(G) = \left\{ 
\begin{array}{ll}
N - 1, & \mbox{if } n_1=n_2=\cdots=n_m=1 \mbox{ or } m=2 \mbox{ and } 1 \in \{n_1, n_2\}, \\
N - 2, & \mbox{otherwise}
\end{array}
\right.
\]
\end{theorem}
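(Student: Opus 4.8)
We treat the two cases of the formula separately, the first being immediate and the second requiring both an upper and a lower bound construction.

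\medskip

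\noindent\textbf{First case.} If all parts have size $1$, then $G = K_N$ and $d(G) = N-1$ by Theorem~\ref{Th_4}. If $m=2$ and $1 \in \{n_1,n_2\}$, then $G$ is a star $S_N = K_{1,N-1}$, so $d(G) = N-1$ by Theorem~\ref{4.2.5.1}. These cover the first line of the formula.

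\medskip

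\noindent\textbf{Second case, upper bound $d(G) \le N-2$.} I plan to exhibit a successful standard layout with $N-2$ searchers, i.e.\ leaving exactly two vertices unoccupied. Since we are not in the first case, either $m \ge 3$, or $m = 2$ with $n_1, n_2 \ge 2$. The natural choice is to leave unoccupied two vertices $a, b$ lying in \emph{different} parts (possible since $m \ge 2$ and not a star, so at least two parts have size $\ge 2$ when $m=2$; when $m \ge 3$ one can even pick $a,b$ so that a third part exists). With every other vertex occupied, consider a vertex $x$ adjacent to both $a$ and $b$ — such an $x$ exists: if $m \ge 3$, take $x$ in a part containing neither $a$ nor $b$; if $m = 2$, any vertex in $a$'s part works to reach $b$, but we need one vertex reaching both, so instead choose $a, b$ in the \emph{same} larger part when $m=2$, and then any vertex in the other part is adjacent to both. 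I will split the upper-bound argument along $m \ge 3$ versus $m = 2$ accordingly. In each subcase, I check that exactly one occupied vertex $x$ sees both unoccupied vertices as its only unprotected neighbours (so $x$ is fireable once, after enough stages, if at all — actually here $x$ has two unprotected neighbours and one searcher, so I must instead place the two holes so that \emph{two distinct} occupied vertices each see exactly one of them). Concretely: pick holes $a,b$, pick distinct occupied vertices $x_a$ adjacent to $a$ and $x_b$ adjacent to $b$, each with its hole as its unique unoccupied neighbour; then $x_a \to a$ and $x_b \to b$ fire in the first stage and the layout is successful. Verifying that such $x_a \ne x_b$ can be chosen uses $m \ge 3$ or the size constraints, and is the routine core of this direction.

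\medskip

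\noindent\textbf{Second case, lower bound $d(G) \ge N-2$.} This is the main obstacle. Suppose for contradiction there is a successful standard layout $L$ (Theorem~\ref{4.1.6}) with at most $N-3$ searchers, so at least three vertices $a,b,c$ are unoccupied. I would argue as in the proof of Theorem~\ref{4.2} and Theorem~\ref{4.1}: since $G$ is complete multipartite and not a star, a vertex of degree $1$ cannot occur, and more importantly, each unoccupied vertex must receive a searcher from a neighbour, hence must be the target of some occupied neighbour. The key point is a counting/flummoxing argument: if an occupied vertex $w$ has two or more unoccupied neighbours, then because $L$ is standard (one searcher on $w$) and $w$'s protected-neighbour count is large, $w$ stays flummoxed forever unless its other unprotected neighbours get protected by someone else first — and I must show this cannot cascade to cover all three holes. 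The cleanest route: show that in a standard layout every unoccupied vertex $v$ must be the \emph{unique} unoccupied neighbour of whatever searcher moves to it at the stage it gets protected, which forces an injection from the set of unoccupied vertices into occupied vertices with a "private" structure; then, using that any two vertices in distinct parts are adjacent and most pairs of vertices are adjacent in a complete multipartite graph, derive that three disjoint such "searcher–hole" pairs cannot coexist without creating a vertex with two unprotected neighbours that never fires. I expect the delicate part to be handling the timing — a hole $a$ might get protected early (freeing up a searcher's neighbourhood) enabling another searcher to later fire into $b$ — so I will need to order the holes by the stage at which they become protected and induct, showing the last hole to be protected has all its occupied neighbours already "used up" and hence cannot be reached. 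Care must be taken with the case $m=2$, $n_1,n_2 \ge 2$ separately, since there adjacency is only across the bipartition and the argument is tightest.
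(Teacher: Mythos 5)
Your handling of the first case is fine, but both halves of the second case have problems. For the upper bound, your ``concrete'' plan --- leave two holes $a,b$ and find \emph{distinct} occupied $x_a, x_b$ each having its hole as its unique unoccupied neighbour, so both fire in the first stage --- cannot be realized when exactly one part has size at least $2$ (e.g.\ $K_{2,1,1}$ or more generally $K_{n_1,1,\ldots,1}$ with $m\geq 3$). There, one hole must lie in the large part $P_1$ and the other hole $b$ in some singleton part; every occupied vertex outside $P_1$ is adjacent to \emph{both} holes, so no valid $x_a$ exists, and placing both holes inside $P_1$ (or in two singleton parts) leaves every searcher flummoxed or motionless. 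The paper's layout for this subcase is genuinely two-stage: the searchers of $P_1\setminus\{u\}$ fire into $v$ first, and only then can the remaining searchers deduce that they must move to $u$. Your aside suggesting that for $m=2$ one should put $a,b$ in the \emph{same} part is also wrong --- then every vertex of the other part has two unprotected neighbours and one searcher, nothing ever fires --- and it contradicts your own ``unique unoccupied neighbour'' requirement; for $m=2$ with both parts of size $\geq 2$ the holes must go in \emph{different} parts, as in the paper.

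For the lower bound you take a route genuinely different from the paper's: the paper counts searchers directly on an arbitrary (not necessarily standard) layout, showing one firing vertex must carry at least $\bigl(\sum_i j_i\bigr)-j_k$ searchers and another at least $j_k$, which sums to $N-2$; you instead reduce to standard layouts via Theorem~\ref{4.1.6} and seek an adjacency/timing contradiction from three holes. Your route can be made to work, but your sketch points at the wrong place: the last hole is \emph{not} the obstruction (once the first two holes are protected, any unfired occupied neighbour of the third hole has a unique unprotected neighbour and happily fires into it). The actual contradiction comes from the first \emph{two} holes protected: if $h_1,h_2,h_3$ are three holes ordered by protection time, the vertex $x_1$ firing into $h_1$ must at that moment see only $h_1$ unprotected, forcing $h_2,h_3$ into $x_1$'s part; the vertex $x_2$ firing into $h_2$ must see $h_3$ still unprotected, forcing $h_3$ into $x_2$'s part, whence $x_1$ and $x_2$ share a part containing $h_2$ --- contradicting $x_2$'s adjacency to $h_2$. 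As written, with the key step misidentified and the timing issue explicitly left open, the lower bound is not established.
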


\begin{proof}
The case that $m=2$ and $1 \in \{n_1,n_2\}$ is just Theorem~\ref{4.2.5.1}.  If $n_1=n_2=\cdots=n_m=1$, then $G \simeq K_N$, and we know $d(K_N)=N-1$ by Theorem~\ref{Th_4}.  
So we henceforth assume there is at least one part of size at least 2 and if $m=2$, then both parts have size at least 2.  

We begin by exhibiting a successful layout using $N-2$ searchers.  First, suppose that there are two parts, say $P_1$ and $P_2$, of size at least 2.  
Choose vertices $u$ in $P_1$ and $v$ in $P_2$, and place a searcher on each vertex other than $u$ and $v$.  This layout is successful since there is at least one searcher in $P_1$, who will move to $v$, and at least one searcher in $P_2$, who will move to $u$.  

Otherwise, exactly one part, say $P_1$, has size at least 2.  Choose a vertex $u$ in $P_1$ and a vertex $v$ in another part, say $P_2$, and place a searcher on each vertex other than $u$ and $v$.  Each searcher in $P_1 \setminus \{u\}$ has exactly one unoccupied neighbour, namely $v$, and so will will move to $v$.  Thus, the searchers on vertices in $V(G) \setminus (P_1 \cup P_2)$, despite having two initially unoccupied neighbours, deduce that they must move to $u$.  Hence the layout is successful.  

We now show that any successful layout $L$ in $G$ must have at least $N-2$ searchers. Let $P_1, P_2, \ldots, P_m$ be the parts of $G=K_{n_1, n_2, \ldots, n_m}$ with $|P_i|=n_i$, and suppose that there are $j_i$ unoccupied vertices in the part $P_i$.  
 Clearly if every vertex is occupied, we have at least $N$ searchers, so we assume that there is at least one unoccupied vertex.  Since $L$ is successful, there must be a searcher who is not flummoxed, so there must be a vertex $u$ in some part $P_k$ with unoccupied neighbours whose searchers can move in the first round, i.e.\ $u$ has at least
\[
\left(\sum_{i=1}^m j_i\right) - j_k
\]
searchers and $j_i \geq 1$ for some $i \neq k$.  If $j_k=0$, then the total number of searchers is thus at least
\[
\sum_{i=1}^m (n_i-j_i) + \left(\sum_{i=1}^m j_i\right) - j_k - 1 = N-1>N-2.
\]
(We subtract 1 to avoid double-counting searchers on $u$.)

Otherwise, $j_k \geq 1$, so there are unoccupied vertices in $P_k$.  There is a vertex $v$ in some part $P_{\ell}$ (with $\ell \neq k$) whose searchers may move to the $j_k$ unoccupied vertices in $P_k$.  This requires that there are at least $j_k$ searchers on $v$ (as this movement may become possible because the searchers on $u$ move to the neighbours of $v$ not in $P_{k}$).  In this case, the total number of searchers is at least
\[
\sum_{i=1}^m (n_i-j_i) + \left(\left(\sum_{i=1}^m j_i\right) - j_k - 1\right) + (j_k-1) = N-2.
\]
\end{proof}
The result of Theorem~\ref{thm:multipartite} is particularly interesting, because it gives the first infinite family of graphs for which we can compute the deduction number but that number does not meet either of the bounds of Theorems~\ref{4.1.1} or \ref{Th_1}.

We conclude this section by considering an elementary result on Cartesian products, which allows us to solve the deduction number of the $n$-hypercube.

\begin{theorem}\label{theorem11} If $G$ and $H$ are graphs, then $$d(G \Box H)\leq \min \{|V(G)| \cdot d(H),|V(H)| \cdot d(G)\}.$$

\end{theorem}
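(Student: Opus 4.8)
The plan is to prove the bound $d(G \Box H) \le |V(H)| \cdot d(G)$; the other inequality follows by symmetry of the Cartesian product. The idea is to take an optimal successful layout $L_0$ on $G$ using $d(G)$ searchers, and to replicate it in each of the $|V(H)|$ copies of $G$ inside $G \Box H$. Recall that $V(G \Box H) = V(G) \times V(H)$, and for a fixed $h \in V(H)$ the set $G_h = \{(g,h) : g \in V(G)\}$ induces a copy of $G$; moreover the edge $(g,h)(g,h')$ is present exactly when $hh' \in E(H)$. So the layout $L$ on $G \Box H$ places, for every $h \in V(H)$, the searchers dictated by $L_0$ on the copy $G_h$. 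This uses exactly $|V(H)| \cdot d(G)$ searchers, and by Theorem~\ref{4.1.6} we may take $L_0$ to be a standard layout, so $L$ is standard as well.

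The key step is to argue that $L$ is successful, and the natural approach is to run the deduction process on $G \Box H$ in lockstep across all copies. I would prove by induction on the stage number $s$ that, at the end of stage $s$, a vertex $(g,h)$ is protected in the game on $G \Box H$ if and only if $g$ is protected at the end of stage $s$ in the game on $G$ with layout $L_0$, and that the searcher movements within each copy $G_h$ mirror those in $L_0$. The base case is immediate from the definition of $L$. For the inductive step, consider a vertex $(g,h)$: its neighbours in $G \Box H$ are the vertices $(g', h)$ with $g'g \in E(G)$, together with the vertices $(g, h')$ with $h'h \in E(H)$. By the induction hypothesis, after stage $s$ every copy $G_{h'}$ has exactly the same protected set as $G$ does under $L_0$; in particular, once $g$ is protected in $G$, the vertex $(g,h')$ is protected in every copy. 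So the ``extra'' $H$-direction neighbours of $(g,h)$ become protected in exactly the same stage as $g$ itself, and they never obstruct a firing that was not already obstructed in $G$: a vertex $(g,h)$ containing searchers is fireable at stage $s+1$ precisely when $g$ is fireable at stage $s+1$ in $G$ (its protected $G$-neighbours match those of $g$, and its protected $H$-neighbours are either all protected or none are, according to whether $g$ itself is protected). Each searcher on $(g,h)$ then moves within $G_h$ to the same target its counterpart uses in $L_0$. Since $L_0$ is successful, eventually every $g \in V(G)$ is protected, hence every $(g,h)$ is protected, and $L$ is successful.

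The main obstacle is the bookkeeping around the $H$-direction edges: one must check carefully that these edges never help a searcher fire (they could only make a vertex \emph{less} fireable by adding unprotected neighbours, but the induction shows those neighbours are protected exactly when $g$ is, so no vertex in $G_h$ fires before its counterpart in $L_0$) and never prevent eventual protection (a vertex $(g,h')$ in the $H$-direction is protected as soon as $g$ is protected in $G$, so it is protected by a searcher moving \emph{within} $G_{h'}$, never relying on an $H$-edge). A subtle point worth spelling out is that a searcher in $G \Box H$ might in principle be tempted to move along an $H$-edge; I would note that under layout $L$ this never happens, because whenever $(g,h)$ fires, all of its currently unprotected neighbours lie in $G_h$ (its $H$-neighbours $(g,h')$ are protected exactly when $g$ is, which is exactly when $(g,h)$ would fire), so every target is in the same copy. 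Once this is established, taking the minimum over the two symmetric bounds completes the proof.
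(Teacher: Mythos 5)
Your proposal is correct and takes essentially the same approach as the paper: replicate an optimal (standard) successful layout of one factor across all of its copies in the product, and observe that since every cross-direction neighbour of an occupied vertex is itself occupied (hence protected from the start), each searcher fires and moves exactly as in the single-factor game. The paper states this more tersely; your lockstep induction simply fills in the verification.
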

\begin{proof} We assume without loss of generality that $|V(G)| \cdot d(H) \le |V(H)| \cdot d(G)$.

For $u \in V(G)$, let $H_u$ be the copy of vertex set $\{u\} \times V(H)$. Then $G \Box H$ contains $|V(G)|$ copies of the graph $H$. Place $d(H)$ searchers in an optimum successful layout on corresponding vertices in each copy of $H$ in the graph $G \Box H$. Since we repeat the same layout, every occupied vertex in $H_{u_{1}}$ is also occupied in $H_{u_{i}}$, for $1 \leq i \leq |V(G)|$. Thus, every searcher in $H_{u_{i}}$ moves within their copy of $H$ following the movement in the layout on $H$. As a result, $|V(G)| \cdot d(H)$ searchers are sufficient, and the result follows.
\end{proof}

In certain cases, this bound achieves equality.

\begin{theorem} \label{Co8}
If $m \geq 2$ is even, and $G$ is a graph of order $n$, then $d(P_{m} \Box G)=\frac{mn}{2}$.
\end{theorem}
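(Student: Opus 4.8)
The plan is to establish the two inequalities $d(P_m \Box G) \le \frac{mn}{2}$ and $d(P_m \Box G) \ge \frac{mn}{2}$ separately. The upper bound is immediate from Theorem~\ref{theorem11}: taking the Cartesian product with $P_m$, we have $d(P_m \Box G) \le |V(G)| \cdot d(P_m) = n \cdot \lceil m/2 \rceil = \frac{mn}{2}$, using Theorem~\ref{CO_3} and the fact that $m$ is even. So the entire content of the theorem is the matching lower bound, and that is where essentially all the work lies.

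For the lower bound, since $P_m \Box G$ has $mn$ vertices, Theorem~\ref{Th_1} only gives $d(P_m \Box G) \ge \lceil mn/2 \rceil = \frac{mn}{2}$ — which is exactly what we want. So in fact the lower bound is also free, and the theorem is just the observation that for $P_m$ with $m$ even the two bounds of Theorems~\ref{4.1.1} and~\ref{Th_1} happen to coincide after taking the product: $|V(G)|\cdot d(P_m) = \lceil |V(P_m \Box G)|/2\rceil$. I would therefore write the proof as two short sentences: one invoking Theorem~\ref{theorem11} together with $d(P_m)=m/2$ for the upper bound, and one invoking Theorem~\ref{Th_1} for the lower bound, noting that $P_m \Box G$ has order $mn$ and $mn$ is even so $\lceil mn/2\rceil = mn/2$.

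The only subtlety to double-check — and the step I would be most careful about — is the arithmetic that makes the two bounds meet: we need $n \cdot d(P_m) = \tfrac{mn}{2}$, which requires $d(P_m) = \tfrac{m}{2}$ exactly, and this is precisely where the hypothesis that $m$ is even (so $\lceil m/2\rceil = m/2$) is used. If $m$ were odd, the upper bound from Theorem~\ref{theorem11} would be $n\lceil m/2\rceil = n(m+1)/2$, which exceeds $\lceil mn/2\rceil$ in general, so the argument genuinely needs evenness. No structural facts about $G$ are needed beyond its order $n$, since both bounds we are invoking depend only on orders and on $d(P_m)$.

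Thus there is no real obstacle here: the result is a clean corollary of Theorems~\ref{Th_1}, \ref{CO_3}, and~\ref{theorem11}, and the "hard part" is merely confirming that the general upper bound of Theorem~\ref{theorem11} and the universal lower bound of Theorem~\ref{Th_1} pinch together exactly under the evenness hypothesis.
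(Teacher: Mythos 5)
Your proposal is correct and matches the paper's proof exactly: the upper bound follows from Theorems~\ref{theorem11} and~\ref{CO_3} (using $d(P_m)=m/2$ for even $m$), and the lower bound is Theorem~\ref{Th_1} applied to the $mn$-vertex product. Your additional remark about why evenness is essential is a nice sanity check but not needed for the argument.
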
 
\begin{proof} By Theorems~\ref{theorem11} and \ref{CO_3}, we have the upper bound, and by Theorem~\ref{Th_1}, we need at least $\frac{mn}{2}$ searchers. Thus, $d(P_{m} \Box G)=\frac{mn}{2}$.
\end{proof}
\begin{corollary} \label{Cor:ProductPn} 
Let $n_1, n_2, \ldots, n_k \geq 2$ be integers. 
 If at least one of $n_1, \ldots, n_k$ is even, 
then $d(P_{n_{1}} \Box P_{n_{2}}\Box\cdots \Box P_{n_{k}})=\frac{n_{1}\cdot n_{2}\cdots n_{k}}{2}$.
\end{corollary}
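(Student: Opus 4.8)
The plan is to derive Corollary~\ref{Cor:ProductPn} by combining the lower bound of Theorem~\ref{Th_1} with an inductive application of Theorem~\ref{Co8}. The Cartesian product of paths $P_{n_1} \Box P_{n_2} \Box \cdots \Box P_{n_k}$ has order $N := n_1 n_2 \cdots n_k$, so Theorem~\ref{Th_1} immediately gives $d(P_{n_1} \Box \cdots \Box P_{n_k}) \geq \lceil N/2 \rceil = N/2$, since $N$ is even (one of the $n_i$ is even). All the work therefore goes into the matching upper bound $d(P_{n_1} \Box \cdots \Box P_{n_k}) \leq N/2$.

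For the upper bound, I would argue as follows. By hypothesis, at least one of the factors, say $n_j$, is even; by the commutativity and associativity of the Cartesian product (up to isomorphism), we may reorder the factors and assume $n_1$ is even. Now set $G := P_{n_2} \Box P_{n_3} \Box \cdots \Box P_{n_k}$, which is a graph of order $n := n_2 n_3 \cdots n_k$ (interpreting the empty product as $1$ when $k=1$, so the statement is just Theorem~\ref{CO_3} in that degenerate case). Then $P_{n_1} \Box \cdots \Box P_{n_k} \cong P_{n_1} \Box G$, and since $n_1$ is even and $n_1 \geq 2$, Theorem~\ref{Co8} applies directly to give $d(P_{n_1} \Box G) = \frac{n_1 n}{2} = \frac{N}{2}$. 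Combining with the lower bound yields $d(P_{n_1} \Box \cdots \Box P_{n_k}) = N/2$, as claimed.

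I do not expect a genuine obstacle here, since Theorem~\ref{Co8} already packages the hard content (the upper bound via Theorems~\ref{theorem11} and \ref{CO_3}, and the lower bound via Theorem~\ref{Th_1}) into exactly the form needed. The only points requiring a sentence of care are: first, the reduction that lets us assume the even factor is listed first, which uses the standard fact that $\Box$ is commutative and associative up to graph isomorphism and that the deduction number is an isomorphism invariant; and second, the bookkeeping that $G$ really is a graph in the sense required by Theorem~\ref{Co8} — in particular that $G$ is nonempty, which holds since each $n_i \geq 2$. So in effect the corollary is a one-line consequence of Theorem~\ref{Co8} plus reordering of factors, and the proof should be written accordingly.
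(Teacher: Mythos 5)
Your proposal is correct and matches the paper's proof: both reorder the factors so that an even $n_1$ comes first, set $G = P_{n_2} \Box \cdots \Box P_{n_k}$, and apply Theorem~\ref{Co8} to conclude. The only difference is that you separately restate the lower bound from Theorem~\ref{Th_1}, which is redundant since Theorem~\ref{Co8} already asserts equality.
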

\begin{proof} Without loss of generality, suppose $G$ is the Cartesian product \linebreak $P_{n_{2}}\Box \cdots \Box P_{n_{k}}$ and $n_{1}$ is an even number. Then, by Theorem~\ref{Co8} the equality holds.
\end{proof}
Recall that the $n$-hypercube $Q_{n}$ is the Cartesian product of $n$ copies of $P_{2}$. 
\begin{corollary} If $n \geq 1$, then $d(Q_{n})=2^{n-1}$.
\end{corollary}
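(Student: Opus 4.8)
The plan is to observe that, by definition, $Q_n$ is the Cartesian product of $n$ copies of $P_2$, and then to invoke Corollary~\ref{Cor:ProductPn} directly. Since $P_2$ has even order $2$, the hypothesis of Corollary~\ref{Cor:ProductPn} --- that at least one of the factors has even order --- is satisfied (indeed, every factor is even). Taking $n_1 = n_2 = \cdots = n_k = 2$ with $k = n$, the corollary gives
\[
d(Q_n) = d(\underbrace{P_2 \Box P_2 \Box \cdots \Box P_2}_{n}) = \frac{\overbrace{2 \cdot 2 \cdots 2}^{n}}{2} = \frac{2^n}{2} = 2^{n-1}.
\]

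For completeness I would dispatch the small case $n = 1$ separately, since one might worry whether the iterated-product formulation covers it: here $Q_1 \cong P_2$, and $d(P_2) = 1 = 2^0$ by Theorem~\ref{CO_3}. Alternatively, one can phrase the whole argument as an induction on $n$ via Theorem~\ref{Co8}: writing $Q_n = P_2 \Box Q_{n-1}$, where $Q_{n-1}$ has order $2^{n-1}$, Theorem~\ref{Co8} with $m = 2$ yields $d(Q_n) = \frac{2 \cdot 2^{n-1}}{2} = 2^{n-1}$, with base case $Q_1 = P_2$ handled as above.

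I do not anticipate any real obstacle here: the statement is an immediate specialization of the preceding corollary (and ultimately of Theorems~\ref{theorem11}, \ref{CO_3}, and \ref{Th_1}, which supply the matching upper and lower bounds $\tfrac{mn}{2}$). The only points meriting a moment's care are (i) confirming that the associativity and commutativity (up to isomorphism) of the Cartesian product let us present $Q_n$ in exactly the form assumed by Corollary~\ref{Cor:ProductPn}, and (ii) verifying the edge case $n = 1$ so that the conclusion holds for all $n \geq 1$ and not merely $n \geq 2$.
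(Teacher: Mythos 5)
Your proposal matches the paper's proof exactly: dispatch $n=1$ via $Q_1 \cong P_2$ with $d(P_2)=1$, and apply Corollary~\ref{Cor:ProductPn} with every $n_i = 2$ for $n \geq 2$. The alternative induction through Theorem~\ref{Co8} is a fine equivalent restatement, but nothing beyond the paper's own argument is needed.
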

 \begin{proof} For $n=1$, $d(Q_{1})=d(P_2)=1$. Applying Corollary~\ref{Cor:ProductPn} for $n\ge 2$, the result follows. 
\end{proof}

\section{The deduction number on trees}
 
In this section, we consider the deduction number of trees, and give an algorithm which finds an optimal successful layout on a tree.  We begin with a lemma regarding the structure of successful layouts on trees.  Here, a {\em stem} refers to a vertex adjacent to a leaf.
\begin{lemma} Let $T$ be a tree of order $n \geq 3$, and let $L$ be a layout in which a stem $s$ is occupied.  There exists a successful layout $L'$ with the same number of searchers as $L$, in which $s$ is unoccupied.

\label{5.1.1}
\end{lemma}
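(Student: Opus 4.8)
The plan is to show that if a stem $s$ is occupied in a successful layout $L$, we can "push" the searcher off $s$ onto one of its leaf neighbours (or redistribute searchers) without destroying success, while keeping the searcher count the same. Let $\ell$ be a leaf adjacent to $s$. The key observation is that $\ell$, having degree $1$, is only ever protected either by being occupied initially or by a searcher arriving from $s$; and whatever protects $\ell$, the vertex $s$ itself is always protected in $L$ (it is occupied), so deleting the $s$–$\ell$ edge effectively decouples $\ell$ from the rest of the game. First I would treat the case where $s$ has a leaf neighbour $\ell$ that is unoccupied in $L$: in the game on $L$, the searcher on $s$ eventually moves (or $s$ has enough searchers) so that $\ell$ becomes protected, meaning in particular $\ell$ is a target of $s$. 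I would then form $L'$ from $L$ by moving one searcher from $s$ to $\ell$ in the initial layout. The count is unchanged. The claim is that $L'$ is successful: essentially, $L'$ is the terminal-ish layout obtained from $L$ by carrying out $s$'s move to $\ell$ in advance, and since in $L$ vertex $s$ was already protected and $\ell$ was a "dead end," advancing this one move does not change which vertices get protected elsewhere — one can mimic the $L$-process on $L'$ stage by stage, with $s$ now having one fewer searcher and one fewer unprotected neighbour ($\ell$), so $s$ remains fireable exactly when it was, and every other vertex behaves identically.

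Next I would handle the case where every leaf neighbour of $s$ is occupied in $L$. Here $s$ together with all its leaf neighbours are occupied, and I would argue we can still relocate the searcher on $s$. Let $\ell$ be a leaf neighbour of $s$; it already has a searcher and that searcher is motionless (a leaf's only neighbour $s$ is occupied hence protected, so the leaf is never fireable toward anything new — if $n \geq 3$, $s$ has another neighbour). Consider moving the searcher from $s$ to… but $\ell$ is occupied, so instead I would reconsider: remove the searcher from $s$ entirely, and show that either the resulting layout is still successful (contradicting minimality only if $|L| = d(G)$, but the lemma doesn't require minimality — it just requires the same number of searchers, so I need to be careful not to lose a searcher). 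The cleaner route: if all leaf-neighbours of $s$ are occupied, pick one leaf $\ell$, delete its (motionless) searcher, and add it back on $s$'s position is no good since that increases $s$'s count. Instead I should move $s$'s searcher onto a non-leaf: look at whether $s$ has an unprotected non-leaf neighbour that $s$ targets in $L$; if so, do the push there. If $s$ targets no vertex at all in $L$ (all of $s$'s neighbours already protected without $s$ moving), then $s$'s searcher is motionless and superfluous in the sense that some leaf $\ell$ of $s$ is occupied; swap — put $s$'s searcher on... actually the right move is: delete $s$'s searcher, and the single searcher already on $\ell$ now has $s$ as its only neighbour, and $s$ is no longer necessarily protected. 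So add the deleted searcher onto $\ell$? It's already occupied. The resolution is to observe that when all of $N(s)$ is protected in $L$ without $s$ moving, the searcher on $s$ can simply be deleted and one of the leaf searchers at $\ell$ can "cover" $s$ by moving there — but a leaf searcher can only move once and needs $s$ to be its unique unprotected neighbour. Re-examine: after deleting $s$'s searcher, $s$ is unprotected, $\ell$ is occupied and now fireable (its only neighbour $s$ is unprotected, so $\ell$ has $\geq 1$ searcher $=$ number of protected neighbours $= 0$, fireable), so $\ell$'s searcher moves to $s$; now $s$ is protected. This is a net zero change in searcher count (we relocated the searcher on $s$ by instead relocating the leaf searcher). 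So $L'$: delete the searcher on $s$, leaving $\ell$ occupied; this $L'$ has the same count, $s$ is unoccupied in $L'$, and I must check $L'$ is successful — it is, since every vertex $s$ was protecting in $L$ gets protected the same way (nothing, since $s$ targeted nothing), $s$ gets protected by $\ell$, and all other vertices are unaffected.

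I expect the main obstacle to be the bookkeeping in the "mimic the process stage by stage" argument, analogous to the proof of Theorem~\ref{4.1.6}: I must verify that advancing $s$'s move (or swapping in the leaf's move) never makes some other vertex $w$ fail to be fireable when it was fireable in $L$, and never causes a searcher that moved in $L$ to have its target already occupied in a way that strands it. Since trees have no cycles, the only neighbour of $\ell$ is $s$, which keeps $\ell$'s alterations from propagating anywhere, and the "protected status" of every vertex at every stage in the $L'$-process dominates that in the $L$-process (the usual monotonicity lemma). The second obstacle is making sure the argument covers all sub-cases of how $s$ relates to its neighbours (leaf vs. non-leaf neighbours, occupied vs. not, whether $s$ ever moves in $L$); I would organize these as: (i) $s$ has an unoccupied leaf neighbour; (ii) all leaf neighbours of $s$ occupied but $s$ moves in $L$ to some non-leaf target; (iii) $s$ moves in $L$ to a leaf target only — impossible if that leaf were occupied, so reduces to (i); (iv) $s$ never moves in $L$ — handled by the delete-and-let-the-leaf-cover-$s$ argument. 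Finally, applying Lemma repeatedly to all stems yields the global statement, though the lemma as stated only concerns a single stem $s$, so I would state and prove it for one $s$ and note that iterating preserves the property for previously-cleared stems (a cleared stem has a leaf neighbour now occupied, so later pushes, which only move searchers toward $s$ from a leaf or away from another stem, can be ordered to not re-occupy it — but this iteration remark may belong to the next result rather than this lemma).
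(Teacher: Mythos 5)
Your core idea --- relocate the searcher(s) from $s$ to their target(s) and then show the game on $L'$ tracks the game on $L$ with at most a one-stage delay --- is exactly the paper's construction, and your stage-by-stage mimicry is the right induction (though note it is a one-stage \emph{lag}, not the ``domination'' you assert: a vertex protected at stage $i$ in the $L$-game may only become protected at stage $i+1$ in the $L'$-game, since $s$ itself is unprotected in $L'$ until a leaf searcher fires back onto it).

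There are, however, two concrete gaps. First, your case (iv) ($s$'s searcher is motionless because all of $N(s)$ is protected without $s$ moving) ends by \emph{deleting} the searcher on $s$, which produces a layout with one fewer searcher than $L$ and therefore does not satisfy the lemma's conclusion; ``relocating the leaf searcher'' during play does not restore the count. You considered the correct fix --- placing the displaced searcher on an already-occupied leaf adjacent to $s$ --- and rejected it because the leaf ``is already occupied,'' but the model explicitly permits a vertex to hold more than one searcher (layouts need not be standard), and this is precisely what the paper does: excess searchers on $s$ are parked on adjacent leaves, and a leaf holding two searchers simply fires one back to $s$ in the first stage. Second, your whole case analysis presumes a single searcher on $s$ with a single target; if $s$ carries several searchers aimed at several unprotected neighbours, moving only one of them to one target leaves $s$ occupied. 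The uniform construction (move every searcher on $s$ to a distinct target, excess to adjacent leaves) removes the need for your cases (i)--(iv) entirely and also disposes of your worry about iterating over stems, which indeed belongs to the application in Theorem~\ref{5.1.5} rather than to this lemma.
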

\begin{proof}
Given the layout $L$, we construct $L'$ by moving the searchers on $s$ to their target vertices.
If there are more searchers on $s$ than target vertices, we move these ``excess searchers'' to leaves adjacent to $s$. 

Let $G$ denote the deduction game played with layout $L$, and $G'$ the game with layout $L'$.  
Note that in $L'$, 
each
leaf adjacent to $s$ must be occupied. Each searcher positioned on the leaves adjacent to $s$ in $L'$ has one unoccupied adjacent vertex, $s$; therefore, each such searcher moves to $s$ in the first stage of $G'$.

We now show that the vertices other than $s$ become protected in $G'$ by considering stages in which searchers move.  
First consider searchers who move in the first stage of $G$.  If the searchers on vertex $v$ do not move to the same target in the first stage of $G'$, then $v$ must be adjacent to $s$.  Since $s$ now becomes occupied in the first stage, the searchers on $v$ can move to their targets in the second stage (if these targets have not already become protected in the first stage).

Next, inductively assume that all vertices that were protected by stage $i$ in the $G$ become protected by stage $i+1$ in $G'$.  Consider a vertex $u$ which becomes protected in stage $i+1$ in $G$, and suppose that a searcher moves from $w$ to $u$ in this stage.  If $u$ becomes protected by stage $i+1$ in $G'$, we are done.  Otherwise, all the neighbours of $w$ which were protected by the end of stage $i$ in $G$ become protected by the end of stage $i+1$ in $G'$, meaning that a searcher on $w$ may move to $u$ in stage $i+2$.
\end{proof}
Note that if the layout $L$ in the statement of Lemma~\ref{5.1.1} is standard and contains $d(G)$ searchers, then the layout $L'$ produced will also be standard.

Now, we introduce 
an algorithm to determine the deduction number of trees, the 
{\em pruning algorithm}.  
The basic idea is to place searchers on leaves to protect their unoccupied neighbours, 
remove the leaves and their target vertices, and repeat this procedure
until no vertex remains. We call the number of searchers in the layout constructed in this way the {\it pruning number} of $T$, denoted by $p(T)$. 

\begin{algorithm}[H]
\label{alg1}

\SetKwInOut{Input}{input}\SetKwInOut{Output}{output}

\Input{A tree $T$}

\Output {A set of occupied vertices forming a layout on $T$}

\BlankLine

initialise $S$ to $\emptyset$

\Repeat{no vertices remain}{

\ForAll{connected components $C$ of $T$}{

\uIf{$|V(C)| \leq 2$}{

Add one vertex of $C$ to $S$

}

\Else{

Add each leaf of $C$ to $S$

}

Delete leaves, stems and isolated vertices from $T$

}

}

\Return{$S$}

\caption{Pruning Algorithm}
\end{algorithm}

\bigskip

It is easy to see that the pruning algorithm produces a successful layout for a tree.  As it adds the minimum number of searchers required to protect the remaining vertices at each iteration, it is reasonable to suspect that it constructs an optimal layout. We now prove that this is indeed the case.
\begin{theorem} In a tree $T$, $p(T)=d(T)$. 
\label{5.1.5}
\end{theorem}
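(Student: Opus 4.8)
The plan is to prove equality by establishing the two inequalities $p(T) \geq d(T)$ and $p(T) \leq d(T)$ separately. The first direction is immediate: as already observed, the pruning algorithm produces a successful layout, so $d(T) \leq p(T)$ by definition of the deduction number. The substance of the argument is the reverse inequality $p(T) \leq d(T)$, which I would prove by induction on the order $n$ of the tree.

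For the induction, I would fix an optimal successful \emph{standard} layout $L$ on $T$ using $d(T)$ searchers (this is permitted by Theorem~\ref{4.1.6}), and then, using Lemma~\ref{5.1.1} repeatedly, modify $L$ into another successful standard layout $L^\dagger$ with the same number of searchers in which \emph{no stem is occupied}. (One must check that Lemma~\ref{5.1.1} can be applied iteratively without re-occupying a previously vacated stem; the note following Lemma~\ref{5.1.1} guarantees standardness is preserved, and one should argue that vacating one stem does not force another stem to become occupied in the resulting layout — this is really a statement about the final layout we choose, so it suffices to pick, among all optimal standard layouts, one minimizing the number of occupied stems and argue it must be zero.) With no stem occupied, every leaf of $T$ must itself be occupied, since a leaf is protected in $L^\dagger$ only if it or its unique neighbour (a stem) is occupied. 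Now consider the forest $T'$ obtained by deleting from $T$ all leaves and all stems (i.e.\ exactly one round of pruning applied to a component with more than two vertices). The searchers sitting on leaves of $T$ number exactly the number of leaves; each stem of $T$ is protected in the game on $T$ by a searcher moving onto it (from a leaf or from elsewhere), and the searchers that end up protecting $T'$ form a successful layout on $T'$ using at most $d(T) - (\text{number of leaves})$ searchers. By the induction hypothesis applied to each component of $T'$, the number of searchers needed is at least $p(T')$, and by construction $p(T) = p(T') + (\text{number of leaves})$, giving $p(T) \leq d(T)$.

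The delicate points, and where I expect the real work to lie, are: (i) making the "restrict the successful game on $T$ to a successful layout on $T'$" step precise — one needs to show that whatever searchers are present (initially or having just arrived) on $V(T')$ at the moment all leaves and stems have been protected constitute a legitimate \emph{initial} layout from which the deduction game on $T'$ succeeds, which requires checking the fireability/flummoxed conditions transfer correctly once the pruned vertices are removed, analogous to the argument used in the proof of Theorem~\ref{4.2}; and (ii) the bookkeeping that the searchers protecting $T'$ are genuinely distinct from the searchers counted on the leaves, so that the counts add without double-counting. There are also boundary cases to dispose of first: $n \leq 2$, and components that become paths on one or two vertices during pruning (handled by the $|V(C)| \leq 2$ branch of the algorithm and by Theorem~\ref{CO_3}).

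Finally, I would note the small subtlety that a stem of $T$ might become a leaf or isolated vertex of $T'$, and that a single pruning round deletes stems even if they were not occupied in $L^\dagger$; the algorithm's "delete leaves, stems and isolated vertices" step matches exactly the forest $T'$ we restrict to, so the recursion is faithful. Assembling these pieces yields $p(T) = d(T)$.
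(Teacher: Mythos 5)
Your overall strategy is the paper's: the direction $d(T)\le p(T)$ is immediate, and the direction $p(T)\le d(T)$ is proved by induction on $n$, starting from an optimal \emph{standard} layout (Theorem~\ref{4.1.6}) and using Lemma~\ref{5.1.1} to push searchers off stems before deleting pruned vertices and invoking the induction hypothesis on the resulting components. The structural difference is the granularity of the induction step: you delete an entire pruning round (all leaves and all stems) at once, which forces you to first arrange that \emph{every} stem is unoccupied, whereas the paper vacates a \emph{single} stem $s$ via one application of Lemma~\ref{5.1.1}, observes that the leaves adjacent to $s$ must then be occupied, and deletes only the pair $\{s,\ell\}$ for one such leaf $\ell$ before recursing on the components of $T-\{s,\ell\}$ (checking afterwards that reassembling the pruned layouts and putting a searcher back on $\ell$ reproduces the pruning algorithm's layout). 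The paper's finer decomposition entirely sidesteps the difficulty you correctly identify.

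That difficulty is a genuine gap in your version as written. Lemma~\ref{5.1.1} vacates $s$ by advancing its searchers to their \emph{target} vertices, and a target of a stem can itself be a stem: in $P_4=v_1v_2v_3v_4$ with the optimal layout $\{v_1,v_2\}$, vacating the stem $v_2$ produces $\{v_1,v_3\}$, which occupies the stem $v_3$. So your proposed repair --- choose an optimal standard layout minimizing the number of occupied stems and derive a contradiction --- does not close on its own, since one application of the lemma need not decrease that count (in the example it stays at $1$). One would need either a finer potential function to show the iteration terminates with all stems vacant, or simply the paper's move of never needing more than one vacant stem. Your other flagged concern --- that restricting the game on $T$ to the forest $T'$ yields a legitimate successful layout on $T'$ --- is real but benign (deleting the leaves and stems only removes unprotected neighbours from vertices of $T'$, so fireability can only occur sooner); the paper glosses over the analogous point in its own induction step, so you are at no disadvantage there.
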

\begin{proof} 
We use induction on the number of vertices, $n$. For trees of order $n=1$ and $n=2$, the result holds, since in this case, the deduction and pruning number are both 1. Let $T$ be a tree of order $n\geq3$, and suppose that in any tree $T'$ with $n-1$ or fewer vertices, 
$p(T')=d(T')$. 

Now consider an optimal successful layout $L$ on $T$. 
We will convert $L$ 
to the layout given by the pruning algorithm. By Theorem~\ref{4.1.6}, we may assume that $L$ is a standard layout.

Let $s$ be a stem in $T$.  Using Lemma~\ref{5.1.1} if necessary, we may assume that $s$ is unoccupied in $L$.  Since $L$ is a successful layout, each leaf adjacent to $s$ must be occupied.  Choose one such leaf, $\ell$, and delete $s$ and $\ell$.
What remains is a collection of components, each of  which is a tree 
of order less than $n-1$. In each of these components, by the induction hypothesis, the pruning algorithm produces an optimal layout; thus we 
can replace the existing layout on each component with the one given by the pruning algorithm without changing the number of searchers. 
Adding back $\ell$ and $s$, with a searcher on $\ell$, we obtain a layout $L_p$ on $T$, which is precisely the layout that would be produced by the pruning algorithm.
Note that the number of searchers in $L_p$ 
is still $d(T)$.  
Therefore, $p(T) = d(T)$.
\end{proof}


\section{Discussion and future work}

This paper has introduced several ideas that would benefit from further research. Consider the idea of a terminal layout. Given a layout $L$, the most basic question we can ask must be: is the terminal layout $L^\ast$ itself always successful? Assuming it is, can we characterize {\em reversible} layouts? That is, can we characterize those layouts such that $L = L^{\ast\ast}$? Moreover, for every layout $L$, the sequence $L$, $L^\ast$, $L^{\ast\ast}$, $\ldots$ seems to have two limit points --- that is, eventually, this sequence appears to be attracted to a pair of reversible layouts! However, without first characterizing reversible layouts, proving this latter result seems like a pipe dream.

We have also introduced several ways to bound the deduction number of a graph constructed from smaller graphs, such as identifying vertices, adding bridges between disconnected graphs, and of course, considering the Cartesian product of two graphs. For the latter case in particular, it is clear that the bound of Theorem~\ref{theorem11} is far from tight in most cases, even though it allowed us to exactly compute the deduction number of hypercubes. For example, consider $G \Box H$, when there is a vertex that contains a motionless searcher in $G$. Then, in every copy of $G$ in the product, that searcher is also motionless. However, each of those searchers is on the same copy of $H$. Instead, we could attempt to use some subset of them to clear that copy of $H$. Unfortunately, how to interlace the two successful layouts into a successful layout in the product is difficult to characterize in general. 

There are also several other elementary directions that are natural to consider for deduction. What is the algorithmic complexity of calculating the deduction number? Is there a structural characterization, as with copwin graphs? Since we understand deduction on trees and cliques, what can we say about the deduction number of chordal graphs in general? Some of these questions will be explored in~\cite{BDOXY}.

\section{Acknowledgements}

Authors Burgess and Dyer acknowledge grant support from NSERC Discovery Grants RGPIN-2019-04328 and RGPIN-2021-03064, respectively.

\bibliographystyle{abbrv}
\bibliography{mybibliography}

\begin{thebibliography}{21}

\bibitem{Aigner}
M.\ Aigner and M.\ Fromme. A game of cops and robbers. {\it Discrete Appl.\ Math.} {\bf 8} (1984), 1--11.

\bibitem{Abbas}
N.\ Alon and A.\ Mehrabian. Chasing a fast robber on planar graphs and random graphs. {\it J.\ Graph Theory} {\bf 78} (2015), 81--96. 

\bibitem{time}
B.\ Alspach, D.\ Dyer, D.\ Hanson and B.\ Yang.  Time constrained graph searching. {\it Theoret.\ Comp.\ Sci.} {\bf 399} (2008), 158--168.


\bibitem{BitarGoles}
J.\ Bitar and E.\ Goles, Parallel chip-firing games on graphs, {\it Theoret.\ Comp.\ Sci.} {\bf 92} (1992), 291--300.

\bibitem{chip}
A.\ Bj\"orner, L.\ Lov\'asz, and P.W.\ Shor. Chip-firing Games on Graphs. {\it European J.\ Combin.}, {\bf 12} (1991), 283--291.

\bibitem{Bre}
R.L.\ Breisch. An intuitive approach to speleotopology. {\it Southwestern Cavers} {\bf 6} (1967), 72--78.

\bibitem{BDOXY} A.\ Burgess, D.\ Dyer, K.\ Ojakian, M.\ Xiao and B.\ Yang.  Deduction, constrained zero forcing and constrained searching.  Preprint.


\bibitem{Chung}
T.H.\ Chung, G.A.\ Hollinger and V.\ Isler. Search and pursuit-evasion in mobile robotics. {\it Auton.\ Robot.} {\bf 31} (2011), 299--316. 

\bibitem{DDTY1}
D.\ Dereniowski, D.\ Dyer, R.M.\ Tifenbach and B.\ Yang.  Zero-visibility cops and robber and the pathwidth of a graph. {\it J.\ Comb.\ Optim.} {\bf 29} (2015), 541--564.

\bibitem{DDTY2}
D.\ Dereniowski, D.\ Dyer, R.M.\ Tifenbach and B.\ Yang.  The complexity of zero-visibility cops and robber. {\it Theoret.\ Comput.\ Sci.} {\bf 607} (2015), 135--148.

\bibitem{DLMN}
C.\ Duffy, T.F.\ Lidbetter, M.E.\ Messinger and R.J.\ Nowakowski.  A variation on chip-firing: The diffusion game.  {\it Discrete Math.\ Theor.\ Comput.\ Sci.} {bf 20} (2018), Paper No.\ 4, 18pp.

\bibitem{MozhganThesis}
M.~Farahani. {\it The deduction model for Cops and Robber.} M.Sc.\ thesis, Memorial University of Newfoundland, 2022.

\bibitem{H}
H.J.\ Kim, R.\ Vidal, D.H.\ Shim, O.\ Shakernia and S.\ Sastry. A hierarchical approach to probabilistic pursuit-evasion games with unmanned ground and aerial vehicles. {\it Proceedings of the 40th IEEE Conference on Decision and Control (Cat. No.01CH37228)}, 2001, pp. 634-639 vol.1, doi: 10.1109/CDC.2001.980175.

\bibitem{Sh}
S.H.\ Lim, T.\ Furukawa, G.\ Dissanayake and H.F.\ Durrant-Whyte. A time-optimal control strategy for pursuit-evasion games problems. {\it Proceeding of the IEEE International Conference on Robotics and Automation}, 2004.  pp. 3962-3967 Vol.4, doi: 10.1109/ROBOT.2004.1308889.


\bibitem{brush}
M.E.\ Messinger, R.J.\ Nowakowski and P.\ Pra{\l}at. Cleaning a Network with Brushes. {\it Theoret.\ Comp.\ Sci.} {\bf 399} (2008), 191--205.

\bibitem{Winkler}
R.J.\ Nowakowski and P.\ Winkler. Vertex-to-vertex pursuit in a graph. {\it Discrete Math.} {\bf 43} (1983), 235--239.


\bibitem{Parsons}
T.D.\ Parsons. Pursuit-evasion in a graph.  In Y.\ Alavi and D.\ Lick (eds.), {\it Theory and Applications of Graphs}.  Lecture Notes in Mathematics Vol.\ 642. Springer-Verlag, Berlin-New York, 1978. pp.\ 426--441.
 
\bibitem{Quilliot}
A.\ Quilliot. {\it Jeux et pointes fixes sur les graphes (Th\`{e}se de 3\`{e}me cycle)}. Universit\'{e} de Paris VI, 1978.

\bibitem{Tang} A. Tang. {\it Cops and robber with bounded visibility.} M.Sc.\ thesis, Dalhousie University, 2004.


\bibitem{Tosic2} R.\ To\v{s}i\'{c}. Vertex-to-vertex search in a graph. {\it Graph Theory }(Dubrovnik 1985), 1985, pp. 233--237.

\bibitem{IS}
I.E.\ Weintraub, M.\ Pachter and E.\ Garc\'{i}a. An Introduction to Pursuit-evasion Differential Games, {\it American Control Conference}, 2020, 1049--1066.

\bibitem{West}
D.B. West. {\it Introduction to Graph Theory}, 2nd edn. Prentice-Hall, 2001.

\end{thebibliography}

\end{document}